\numberwithin{equation}{section}
\numberwithin{algorithm}{section}
\theoremstyle{plain}
\newtheorem{theorem}{Theorem}[section]
\newtheorem{lemma}[theorem]{Lemma}
\theoremstyle{definition}
\theoremstyle{remark}
\DeclareMathOperator{\tr}{Tr}
\DeclareMathOperator{\sym}{sym}
\DeclareMathOperator{\psym}{psym}
\newcommand{\N}{\mathbb{N}}
\newcommand{\C}{\mathbb{C}}
\newcommand{\R}{\mathbb{R}}
\newcommand{\x}{\mathbf{x}}
\newcommand{\y}{\mathbf{y}}
\newcommand{\z}{\mathbf{z}}
\newcommand{\cA}{\mathcal{A}}
\newcommand{\cB}{\mathcal{B}}
\newcommand{\cE}{\mathcal{E}}
\newcommand{\cF}{\mathcal{F}}
\newcommand{\cG}{\mathcal{G}}
\newcommand{\cH}{\mathcal{H}}
\newcommand{\cS}{\mathcal{S}}
\newcommand{\cT}{\mathcal{T}}
\newcommand{\ba}{\mathbf{a}}
\newcommand{\bb}{\mathbf{b}}
\newcommand{\bc}{\mathbf{c}}
\newcommand{\bF}{\mathbf{F}}
\newcommand{\bG}{\mathbf{G}}
\newcommand{\bu}{\mathbf{u}}
\newcommand{\bx}{\mathbf{x}}
\newcommand{\rP}{\mathrm{P}}
\newcommand{\rS}{\mathrm{S}}
\newcommand{\0}{\mathbf{0}}
\newcommand{\trans}{^\top}
\begin{document}
\title
{Upper bounds for the spectral norm of symmetric tensors}
\author{Shmuel Friedland}
\date{April 19, 2021}
\begin{abstract}  
The maximum of the absolute value of a real homogeneous polynomial of degree  $d\ge 3$ on the unit sphere corresponds to the spectral norm of the induced real $d$-symmetric tensor $\cS$.
We give two sequences of upper bounds on the spectral norm of $\cS$, which are stated in terms of certain roots of the Hilbert-Schmidt norms of corresponding iterates.  We show that these sequences are converging to a limit, which is the minimal value of these upper bounds.  Some generalizations to iterates of homogeneous polynomial maps are discussed.

\end{abstract}
\maketitle
 \noindent {\bf 2020 Mathematics Subject Classification} 15A42, 15A60, 15A69, 15B48.

\noindent \emph{Keywords}:  Tensors, spectral norm, homogeneous polynomials, iterations of homogeneous polynomial maps, convergence of upper bounds.
\maketitle

\section{Introduction} \label{sec:intro}
Let $\R^n$ be the space of real column vectors with $n$-coordinates, and denote by 
$\otimes_{j=1}^d \R^{n_j}$ the space of $d$-mode tensors.  A tensor $\cT\in \otimes_{j=1}^d \R^{n_j}$ has coordinates $t_{i_1,\ldots,i_d}, i_1\in[n_1],\ldots,i_d\in[n_d]$, where $[n]=\{1,\ldots,n\}$.  The space $\otimes_{j=1}^d \R^{n_j}$
has an inner product and the induced Hilbert-Schmidt norm
\begin{equation}\label{definprodHSnorm}
\begin{aligned}
\langle \cS,\cT\rangle:=\sum_{i_j\in[n_j],j\in [d]} s_{i_1,\ldots,i_d}t_{i_1,\ldots,i_d}, \quad
\cS=[s_{i_1,\ldots,i_d}], \cT=[t_{i_1,\ldots,i_d}],\\
\|\cT\|_{HS}=\sqrt{\langle \cT,\cT\rangle}=\sqrt{\sum_{i_j\in[n_j],j\in [d]} |t_{i_1,\ldots,i_d}|^2}.
\end{aligned}
\end{equation}
Recall that that the spectral norm of $\cT$ \cite{FL18} is defined 
\begin{equation}\label{specnormdef}
\begin{aligned}
\|\cT\|_{\sigma}=\\
\max\{|\langle \cT,\otimes_{j\in]d]}\x_j\rangle|, \x_j\in\R^n, j\in[d],
\|\x_1\|_2=\cdots=\|\x_d\|_2=1\}.
\end{aligned}
\end{equation}
Here $\|\x\|_2$ is the Euclidean norm on $\R^n$.  Clearly
\begin{equation}\label{basnrmineq}
\|\cT\|_\sigma\le \|\cT\|_{HS}.
\end{equation}

The space of equidimensional  $d$-mode tensors, $n_1=\cdots=n_d=n$, is denoted by $\otimes^d\R^n$.  Denote by $\rS^d\R^n\subset \otimes^d\R^n$ the subspace of $d$-symmetric tensors.  (A tensor $\cS$ is called symmetric if the value of the entries  $s_{i_1,\ldots,i_d}$ don't change if we permute the indices.)  Banach's theorem \cite{Ban38} yields
\begin{equation}\label{Banthm}
\|\cS\|_{\sigma}=\max\{|\langle \cS,\otimes^d\x\rangle|, \x\in\R^n,\|\x\|_2=1\}, \quad \cS\in\rS^d\R^n.
\end{equation}
It is known that computing the spectral norm of $3$-tensors for $d\ge 3$ is NP-hard \cite{DL08},
and computing the spectral norm of symmetric $4$-tensor is NP-hard \cite{FW20}.

There is an isomorphism between the space of symmetric tensors $\rS^d\R^n$ and the space of homogeneous polynomials $f$ of degree $d$ in $n$-real variables, denoted as $\rP(d,n,\R)$.  Namely $f(\x)=\langle S,\otimes^d\x\rangle$.  Define 
$\|f\|_{HS}:=\|\cS\|_{HS}$ and $\|f\|_\sigma:=\|\cS\|_\sigma$.  It is known that $\|f\|_\sigma$ is the maximum of $|f(\x)|$ on the unit sphere $\|\x\|_2=1$ \cite{FW20}.
Hence the inequality \eqref{basnrmineq} boils down to 
\begin{equation}\label{basfineq}
\|f\|_\sigma\le \|f\|_{HS}
\end{equation}

Let us consider the case of symmetric matrices $S=[s_{i,j}]\in \rS^2\R^n$.
Then $S$ has $n$-real eigenvalues $\lambda_1(S)\ge \cdots\ge \lambda_n(S)$.
Recall that 

\noindent
$\|S\|_\sigma=\max(|\lambda_1(S)|,|\lambda_n(S)|)$.  The sequence $\|S^k\|_{HS}, k\in\N$ is a submultiplicative sequence, and
\begin{equation}\label{upbdsymmat}
\begin{aligned}
\|S^k\|_{HS}^{1/k}=\big(\tr S^{2k})^{1/(2k)}=\big(\sum_{i=1}^n \lambda^{2k}_i(S))^{1/(2k)}, \quad k\in\N\\ 
\textrm{ is a decreasing sequence converging to }\|S\|_{\sigma}.
\end{aligned}
\end{equation}

The following problem was raised by Harald Andres Helfgott on Mathoverflow \cite{Hel21}:
Is it possible to improve the trivial upper bound  \eqref{basnrmineq} for $d(\ge 3)$-mode symmetric tensors, which is equivalent to  \eqref{basfineq}?  Ideally one would like to have a similar result to \eqref{upbdsymmat}. 

In this paper we give two sequences of upper bounds on the spectral norm of $\cS$, which are stated in terms of certain roots of the Hilbert-Schmidt norms of corresponding iterates.  We show that these sequences are converging to a limit, which is the minimal value of these upper bounds.  Some generalizations to iterate of homogeneous polynomial maps are discussed.

We now summarize briefly the contents of this paper.  In Section \ref{sec:prodin}
we discuss some product inequality of norms of symmetrized tensors, which gives rise to the product inequality of the Hilbert-Schmidt norm of homogeneous polynomials $\|f\|_{}$.  Theorem \ref{mtheorema} shows that the sequence $\|f^k\|_{HS}^{1/k}, k\in\N$ is bounded below by $\|f\|_{\sigma}$, and is convergent to $\rho_1(f)$.  Moreover $\|f^k\|_{HS}^{1/k}\ge \|f^{2k}\|_{HS}^{1/(2k)}$.  As in the quadratic case, if $f$ is orthogonally diagonalizable then $\rho_1(f)=\|f\|_{\sigma}$.

In Section \ref{sec:polmap} we mainly discuss polynomial maps $\bF:\R^n\to\R^n$,
where each coordinat of $\bF$ is a homogeneous polynomial of degree $d$.  The linear space of these polynomial maps is isomorphic to the tensor subspace $\R^n\otimes \rS^p\R^n\subset \otimes^{p+1}\R^n$.  Then $\|\bF\|_{HS}$ and $\|\bF\|_\sigma$ correspond to $\|\cF\|_{HS}$ and $\|\cF\|_{\sigma}$ respectively.
Here $\|\bF\|_\sigma$ is the maximum of $\|\bF(\x)\|_2$ on the unit sphere.  Clearly $\|\bF\|_\sigma\le \|\bF\|_{HS}$.
The $k$-th iterate $\bF^{\circ k}$ of $\bF$ corresponds to the $k$-th composition tensor $\cF^{\circ k}\in\R^n\otimes^{p^k}\R^n$.   Theorem \ref{mtheoremb} shows that the two sequences  $\{\|\bF^{\circ k}\|_\sigma^{(p-1)/(p^k-1)}\}$ and $\{\|\bF^{\circ k}\|_{HS}^{(p-1)/(p^k-1)}\}$ converge to $\rho_3(\cF)$ and $\rho_2(\cF)$ respectively,
and $\rho_3(\cF)\le \rho_2(\cF)$.  In the case $\cF$ is symmetric, equivalently $\bF=(1/(p+1))\nabla f$, the first sequence is constant and $\rho_3(\cF)=\|f\|_\sigma$.
Then 
\begin{equation*}
\|f\|_\sigma \le \|\bF^{\circ (2k)}\|_{HS}^{(p-1)/(p^{2k}-1)}\le  \|\bF^{\circ k}\|_{HS}^{(p-1)/(p^{k}-1)}
\end{equation*}
This is the second improvement of  \eqref{basfineq}.  If $f$ is orthogonally diagonalizable then $\|f\|_{\sigma}=\rho_2(\cF)$.

In Section \ref{sec:simpubsn} we discuss simple upper bound on $\|\cT\|_\sigma$ related to the mathoverflow discussions \cite{Hel21}.  In Section \ref{sec:remcom} we briefly point out how to generalize our results to complex valued tensors and homogeneous polynomials.  We also raise two open problems on the nature of $\rho_1(f)$ and $\rho_2(\cF)$.

\section{Product inequalities}\label{sec:prodin}
A basic inequality in operator theory is the submultiplicative inequality $\|QP\|_{ac}\le \|Q\|_{bc} \|P\|_{ab}^q$, where
$P:\R^{n}\to \R^{m}, Q:\R^m\to \R^l$, and $P$ and $Q$ are bounded homogeneous operators: 
\begin{eqnarray*}
P(t\x)=t^p P(\x), \, \x\in\R^n, \quad Q(t\y)=t^qQ(\y),\,\y\in\R^m, \quad t,q>0.
\end{eqnarray*}
Here $\|\x\|_a,\|\y\|_b,\|\z\|_c$ are norms on $\R^n,\R^m,\R^l$ respectively, and $\|P\|_{ab}$ is the induced operator norm $\|P\|_{ab}=\max\{\|P\x\|_b,\|\x\|_a=1\}$.

Recall that for $P\in\R^{m\times n}, Q\in\R^{l\times m}$ one has the submulitplicative inequality $\|QP\|_{HS}\le \|Q\|_{HS}\|P\|_{HS}$ \cite[Sec. 5.6]{HJ}, even though it does not seem to follow from the inequality $\|QP\|_{ac}\le \|Q\|_{bc} \|P\|_{ab}$.

We now discuss some generalizations of these inequalities to tensors.
For general tensors $\cA=[a_{i_1,\ldots,i_p}]\in \otimes_{k=1}^p\R^{m_k},\cB=[b_{j_1,\ldots,j_q}]\in\otimes_{l=1}^q\R^{n_l}$ one can define the tensor (Kronecker) product $\cA\otimes\cB=[a_{i_1,\ldots,i_p}b_{j_1,\ldots,j_q}]\in(\otimes_{k=1}^p\R^{m_k})\otimes(\otimes_{l=1}^q \R^{n_l})$.
From the definitions of the Hilbert-Schmidt and the spectral norm 
it is straightforward to deduce
\begin{equation}\label{tenprodeq}
\begin{aligned}
\|\cA\otimes \cB\|_{HS}=\|\cA\|_{HS}\|\cB\|_{HS},\\
\|\cA\otimes \cB\|_{\sigma}=\|\cA\|_{\sigma} \| \cB\|_\sigma.
\end{aligned}
\end{equation}

We first discuss symmetric tensors.
Recall that that $p$-symmetric tensors $\rS^p\R^n$ are closely related to real homogeneous polynomials of degree $p$ in $n$ variables, denoted as $\rP(p,n,\R)$.
Assume that $\cF\in \rS^p\R^n$.  Then
\begin{equation}\label{defpolfx}
 \begin{aligned}
f(\x)=\sum_{ j_k+1\in [p+1],k\in[n], j_1+\cdots +j_n=p} \frac{p!}{j_1!\cdots j_n!} \phi_{j_1,\ldots,j_n} x_1^{j_1}\cdots x_n^{j_n}=\\
\langle \cF,\otimes^p \x\rangle=
\sum_{i_1,\ldots,i_p\in[n]}f_{i_1,\ldots,i_p}x_{i_1}\cdots x_{i_p}\in\rP(p,n,\R).
\end{aligned}
\end{equation}
Note that $\phi_{j_1,\ldots,j_n}=f_{i_1,\ldots,i_p}$, where $j_k$ is the number of times $k$ appears in the multi sequence $\{i_1,\ldots,i_p\}$ \cite{FW20}.
Vice versa, given a polynomial $f(\x)\in \rP(p,n,\R)$ there exists a unique $\cF\in\rS^p\R^n$ such that $f(\x)=\langle \cF,\otimes^p\x\rangle$.
Define $\|f\|_{HS}:=\|\cF\|_{HS}$ and  $\|f\|_\sigma:=\|\cF\|_{\sigma}$. 
Recall \cite{FW20}:
\begin{equation}\label{defnormf(x)}
\begin{aligned}
\|f\|_{HS}=\sqrt{\sum_{ j_k+1\in [p+1],k\in[n], j_1+\cdots +j_n=p} \frac{p!}{j_1!\cdots j_n!}|\phi_{j_1,\ldots,j_p}|^2}=\|\cF\|_{HS},\\
\|f\|_\sigma=\max\{|f(\x)|, \|\x\|_2=1\}=\max\{\frac{|f(\x)|}{\|\x\|_2^p}\}=\|\cF\|_\sigma.
\end{aligned}
\end{equation}

Given two symmetric tensors $\cF\in\rS^p\R^n$ and $\cG\in\rS^q\R^n$ we let $f(\x)=\langle \cF,\otimes^p\x\rangle$ and $g(\x)=\langle \cG,\otimes^q\x\rangle$.  Clearly $fg\in\rP(p+q,n,\R)$.  Hence 
one can define $\cF\underset{\rm sym}{\otimes}\cG=\cH\in \rS^{p+q}\R^n$, such that
$f(x)g(x)=\langle \cH, \otimes^{p+q}\x\rangle$.  

Let $\sym:\otimes^d\R^n\to \rS^d\R^n$ be the symmetrization map.  That is 
for $\cT=[t_{i_1,\ldots,i_d}]\in\otimes^d\R^n$ define $\cS=[s_{i_1,\ldots,i_d}]=\sym(\cT)$ by
\begin{equation*}
s_{i_1,\ldots,i_d}=\frac{1}{d!}\sum_{\sigma\in\Sigma_d}t_{i_{\sigma(1)},\ldots,i_{\sigma(d)}}.
\end{equation*}
Here $\Sigma_d$ is the set of bijections $\sigma:[d]\to [d]$.  
\begin{lemma}\label{symprodinlem}  Assume that $\cT\in\otimes^d\R^n,\cF\in\rS^p\R^n$ and $\cG\in\rS^q\R^n$.  Then
\begin{enumerate}[(a)]
\item  $\|\sym(\cT)\|_{HS}\le \|\cT\|_{HS}$.  Equality holds if and only if  $\sym(\cT)=\cT$.
\item $\langle \cT,\otimes^d\x\rangle=\langle \sym(\cT),\otimes^d\x\rangle$.
\item $\|\sym(\cT)\|_{\sigma}\le \|\cT\|_{\sigma}$.  Equality holds if and only if there exists $\x\in\R^n, \|\x\|_2=1$ such that $\|\cT\|_\sigma=|\langle \cT, \otimes^d\x\rangle|$.
\item $\cF\underset{\rm sym}{\otimes}\cG=\sym(\cF\otimes \cG)$.
\item $\|\cF\underset{\rm sym}{\otimes}\cG\|_{HS}\le \|\cF\|_{HS}\|\cG\|_{HS}$.  Equality holds if and only if either $\cF$ or $\cG$ is a zero tensor, or $\cF$ and $\cG$ are rank one symmetric tensors of the form $a\otimes^p\bc, b\otimes^q\bc$ for some $\bc\in \R^n\setminus\{\0\}, a,b\in\R\setminus\{0\}$.
\item Let $f(\x)=\langle \cF,\otimes^p\x\rangle$ and $g(\x)=\langle\cG,\otimes^q\x\rangle$.
Then 
\begin{enumerate}[(i)]
\item $|f(\bc)|\le \|f\|_{HS}\|\bc\|_2^p$.  For $\bc\ne \0$ equality holds if and only if there exists $a\in\R$ such that $f(\x)=a(\bc^\top\x)^p$ for all $\x\in\R^n$.
\item $\|f g\|_{HS}\le \|f\|_{HS}\|g\|_{HS}$.  Equality holds if and only if either $f$ or $g$ is a zero polynomial, or there exists $\bc\in\R^n\setminus\{\0\},a,b\in\R\setminus\{0\}$ such that $f(\x)=a (\bc^\top \x)^p$ and $g(\x)=b (\bc^\top \x)^q$.
\item $\|fg\|_\sigma\le \|f\|_\sigma \|g\|_\sigma$.  Equality holds if and only if there exists $\x\in\R^n, \|\x\|_2=1$, such that $\|f\|_\sigma=|f(\x)|$ and $\|g\|_\sigma=|g(\x)|$.
\item For a positive integer $k$ equality $\|f^k\|_\sigma=\|f\|_{\sigma}^k$ holds.
\end{enumerate}
\end{enumerate}
\end{lemma}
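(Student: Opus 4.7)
The plan is to treat the six parts in dependency order. Parts (a)--(d) all flow from the single observation that $\sym\colon \otimes^d\R^n\to \rS^d\R^n$ is the orthogonal projection onto $\rS^d\R^n$ with respect to the Hilbert--Schmidt inner product. Idempotence is immediate from the definition; self-adjointness $\langle \sym(\cT),\cU\rangle=\langle \cT,\sym(\cU)\rangle$ follows by re-indexing the permutation sum via $\sigma\mapsto \sigma^{-1}$. Hence (a) is the standard contraction inequality for an orthogonal projection, with equality iff $\cT\in \rS^d\R^n$, i.e.\ $\sym(\cT)=\cT$. For (b), since $\otimes^d\x$ is already symmetric, self-adjointness gives $\langle \cT,\otimes^d\x\rangle=\langle \cT,\sym(\otimes^d\x)\rangle=\langle \sym(\cT),\otimes^d\x\rangle$. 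Part (c) then combines (b) with Banach's theorem \eqref{Banthm}: $\|\sym(\cT)\|_\sigma=\max_{\|\x\|_2=1}|\langle \sym(\cT),\otimes^d\x\rangle|=\max_{\|\x\|_2=1}|\langle \cT,\otimes^d\x\rangle|\le \|\cT\|_\sigma$, with equality iff the supremum in \eqref{specnormdef} is attained at some symmetric product. For (d), both $\cF\underset{\rm sym}{\otimes}\cG$ and $\sym(\cF\otimes \cG)$ lie in $\rS^{p+q}\R^n$ and represent the same polynomial $f(\x)g(\x)$; uniqueness of the symmetric lift of a polynomial forces them to be equal.

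Part (e) is where I expect the main difficulty to lie. The inequality follows by chaining (d), part (a) applied to $\cF\otimes \cG$, and the identity $\|\cF\otimes \cG\|_{HS}=\|\cF\|_{HS}\|\cG\|_{HS}$ from \eqref{tenprodeq}. For the equality clause, assume $\cF,\cG$ are both nonzero; the equality case of (a) then forces $\cF\otimes \cG$ to be fully symmetric in all $p+q$ indices. Applying a single transposition that swaps one index of the first block with one index of the second, and fixing the remaining indices, gives the scalar identity $f_{i,k_2,\ldots,k_p}\,g_{j,l_2,\ldots,l_q}=f_{j,k_2,\ldots,k_p}\,g_{i,l_2,\ldots,l_q}$ for all $i,j$, i.e.\ the 1-mode slices of $\cF$ and $\cG$ (with the other indices frozen) are proportional. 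Varying the frozen indices shows that every nonzero 1-mode slice of both $\cF$ and $\cG$ lies on a single line $\R\bc\subset \R^n$. Writing $\cG=\bc\otimes \alpha$ for some $(q-1)$-tensor $\alpha$ and then using the full symmetry of $\cG$ inductively peels off one factor of $\bc$ at a time, leaving $\cG=b\otimes^q\bc$, and symmetrically $\cF=a\otimes^p\bc$. The delicate point is precisely this induction: showing that each reduction step is compatible with symmetry so that no transverse direction can appear, which is really the statement that every rank-one symmetric tensor is of the form $\lambda \otimes^q \bc$.

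Part (f) is the polynomial translation of what precedes, and each sub-item is short. (i) is Cauchy--Schwarz applied to $f(\bc)=\langle \cF,\otimes^p\bc\rangle$ in the Hilbert--Schmidt inner product, with equality iff $\cF$ is proportional to $\otimes^p\bc$, equivalently $f(\x)=a(\bc^\top\x)^p$. (ii) is the polynomial restatement of (e) via the correspondences $f\leftrightarrow \cF$ and $fg\leftrightarrow \cF\underset{\rm sym}{\otimes}\cG$, so the equality classification is inherited from (e). (iii) follows from the pointwise bound $|f(\x)g(\x)|\le \|f\|_\sigma\|g\|_\sigma$ on the unit sphere and the fact that equality in a max-of-product bound requires a common maximizer. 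Finally (iv) is an induction on $k$ using (iii) with the second factor equal to $f^{k-1}$: any unit maximizer of $|f|$ automatically maximizes $|f^{k-1}|=|f|^{k-1}$, so the common-maximizer condition is free and $\|f^k\|_\sigma=\|f\|_\sigma\cdot \|f^{k-1}\|_\sigma$ closes the induction.
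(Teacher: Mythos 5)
Your proposal is correct and follows essentially the same route as the paper: (a)--(d) via the symmetrization projection (the paper proves (a) by the entrywise mean inequality but explicitly notes the orthogonal-projection view right after the lemma), (e) by chaining (a), (d) and $\|\cF\otimes\cG\|_{HS}=\|\cF\|_{HS}\|\cG\|_{HS}$ with the equality case reduced to classifying when $\cF\otimes\cG$ is fully symmetric, and (f) as the polynomial translation. Your ``delicate'' induction for the equality case in (e) does go through and is in substance the paper's own argument (the paper organizes it as a $p=1$ base case by induction on $q$, then reduces $p>1$ to it and uses $\cF\otimes\cG=\cG\otimes\cF$, whereas you get proportionality of the $1$-mode slices of both factors at once from a single mixed transposition), so no genuine gap remains.
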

\begin{proof} (a)  follows from the inequality $k| \big(\sum_{i=1}^k a_k\big)/k|^2\le \sum_{i=1}^k |a_k|^2$.  Furthermore equality holds if and only if $a_1=\cdots=a_k$.

\noindent
(b) follows from the fact that $\otimes^d\x$ is a symmetric tensor.

\noindent
(c) follows from (b), and the characterizations \eqref{specnormdef} and \eqref{Banthm}.  The equality case is straightforward.

\noindent
(d) Recall that $\cF\underset{\rm sym}{\otimes}\cG=\cH$, where $\cH$ a unique symmetric tensor such that $\langle \cH,\otimes^{p+q}\x\rangle=\langle \cF\otimes\cG,\otimes^{p+q}\x\rangle$.  By the definition $\langle \cF\otimes\cG,\otimes^{p+q}\x\rangle=\langle\sym(\cF\otimes \cG), \otimes^{p+q}\x\rangle$.  Hence
$\cF\underset{\rm sym}{\otimes}\cG=\sym(\cF\otimes\cG)$.  

\noindent
(e) Use (d) and the inequality in (a) $\|\sym(\cF\otimes \cG)\|_{HS}\le  \|\cF\otimes \cG\|_{HS}$ to deduce the inequality $\|\cF\underset{\rm sym}{\otimes}\cG\|_{HS}\le \|\cF\otimes\cG\|_{HS}$.  Now use the first equality of \eqref{tenprodeq} to deduce the inequality $\|\cF\underset{\rm sym}{\otimes}\cG\|_{HS}\le \|\cF\|_{HS}\|\cG\|_{HS}$.  

We now discuss the equality case in the above inequality. By (a) equality holds if and only $\cF\otimes\cG=\sym(\cF\otimes\cG)$.  
Clearly if either $\cF$ or $\cG$ is a zero tensor the above equality holds.  Thus we assume that $\cF$ and $\cG$ are nonzero tensors and $\cF\otimes \cG$ is a symmetric tensor.

Suppose first that $p=1$, that is, $\cF=\bc\in\R^n\setminus\{\0\}$.  We now show by induction on $q$ that $\cG=b\otimes^q\bc$.
Let us first consider the well known case where $p=q=1$.  Then $\cF=\bc,\cG=\y\in\R^n$.  Thus $\cF\otimes\cG=\bc\otimes\y$.  Then $\bc\otimes\y$ is a symmetric matrix if and only if $\y=b\bc$.  
Assume next that $p=1$ and $q>1$.  Then $\cF=[c_i], \cG=[g_{j_1,\ldots,j_q}]$.  Assume that $[c_ig_{j_1,\ldots,j_q}]$ is a symmetric tensor in the indices $i,j_1,\ldots,j_q]$.  In particular if we fix the indices $j_2,\ldots,j_q$ we obtain that the matrix
$[c_i g_{j_1,\ldots,j_q}]$ is symmetric in $i,j_1$.  Hence from the case $p=q=1$ we deduce that there exists $b_{j_2,\ldots,j_q}$ such that $b_{j_1,\ldots,j_q}=b_{j_2,\ldots,j_k}c_{j_1}$ for $j_1,\ldots,j_q\in[n]$.  Thus $\cG=\bc\otimes\cB $, where $\cB\in \otimes^{q-1}\R^n$.  As $\cG$ symmetric and $\bc\ne \0$ it follows that $\cB$ is a symmetric tensor.  As $\cG\ne 0$ we deduce that $\cB$ is a nonzero symmetric tensor.  Hence by the induction assumption $\cB=b\otimes^{q-1}\bc$, and $\cG=b\otimes^q\bc$.  

We now consider the case where $p>1$.  Assume that $f_{i_1,\ldots,i_p}\ne 0$.
Fix the indices $i_1,\ldots,i_{p-1}$ and let $c_i=f_{i_1,\ldots,i_{p-1},i}$ for $i\in[n]$.  
Observe next that the $1+q$ mode tensor obtained from $\cF\otimes \cG=[f_{i_1,\ldots,i_p} g_{j_1,\ldots,j_q}]$ by fixing the coordinates $i_1,\ldots,i_{p-1}$ is a nonzero symmetric tensor of the form $\bc\otimes \cG$.  Hence the case $p=1$ yields
that $\cG=b\otimes^q\bc$.  As $\cF\otimes\cG$ is symmetric it follows that $\cF\otimes\cG=\cG\otimes\cF$.  The previous arguments yield that $\cF=a\otimes^p\bc$.

Clearly, if $\cF=a\otimes^p\bc$ and $\cG=b\otimes^q\bc$ then $\cF\otimes \cG$ is a symmetric tensor.

\noindent
(f) (i) follows from the Cauchy-Schwarz inequality.

\noindent
(ii) follows from (e).

\noindent
(iii) and (iv) follow from the second characterization in \eqref{defnormf(x)}.
\end{proof}

Note that part (a) of the above lemma follows from the well known fact that the tensor $\sym(\cT)$ is the orthogonal projection of $\otimes^d\R^n$ on $\rS^d\R^n$.  

We now give a first analog of improvement of the inequality \eqref{basnrmineq}.
\begin{theorem}\label{mtheorema}
Assume that $f(\x)$ is a nonzero real homogeneous polynomial of degree $p\ge 1$ in $n$ variables. 
\begin{enumerate}[(a)]
\item Suppose that $f$ is proportional to a $p$-power of a linear form: $f(\x)=a(\bc^\top\x)^p$ for some $\bc\in\R^{n}\setminus\{\0\}, a\ne 0$.  Then
\begin{equation}\label{mtheorema1}
\|f^k\|_{HS}=\|f^k\|_{\sigma}=|a|^k\|\bc\|_2^{kp}, \quad k\in\N.
\end{equation}
\item Assume that $f$ is not proportional to a $p$-power of a linear form.
Then for $k,l\in\N$
\begin{equation}\label{fineq}
\begin{aligned}
\|f^{k+l}\|_{HS}< \|f^k\|_{HS}\|f^l\|_{HS},\\
\|f^{2k}\|_{HS}< \|f^k\|_{HS}^2,\\
\|f\|_{\sigma}^k< \|f^k\|_{HS}.
\end{aligned}
\end{equation}
 In particular the limit below exists and satisfies the inequality:
\begin{equation}\label{limfkHSnorm}
\rho_1(f):=\lim_{k\to\infty} \|f^k\|_{HS}^{1/k}\ge \|f\|_\sigma.
\end{equation}
Furthermore $\rho_1(f)<\|f^k\|_{HS}^{1/k}$ for each $k\in\N$.
\end{enumerate}
\end{theorem}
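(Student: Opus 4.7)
The plan for part (a) is a direct computation: writing $f^k(\x)=a^k(\bc^\top\x)^{kp}$ identifies the associated symmetric tensor as the rank-one symmetric tensor $a^k(\otimes^{kp}\bc)$, whose Hilbert-Schmidt and spectral norms both equal $|a|^k\|\bc\|_2^{kp}$, either via the two product formulas in \eqref{tenprodeq} applied inductively or via the equality case of Lemma~\ref{symprodinlem}(f)(i). This gives \eqref{mtheorema1}.

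The engine driving part (b) is the following closure property: \emph{if $f$ is nonzero and not proportional to a $p$-power of a linear form, then neither is $f^k$ for any $k\in\N$.} Suppose, for contradiction, that $f^k=a'(\bc^\top\x)^{kp}$ with $\bc\ne\0$; necessarily $a'\ne 0$. Since $\R[x_1,\ldots,x_n]$ is a UFD and $\bc^\top\x$ is irreducible, unique factorization of $f^k$ forces $f=u(\bc^\top\x)^p$ for some $u\in\R$ with $u^k=a'$. For $a'>0$ or $k$ odd such a real $u$ exists and $f$ is a $p$-power of a linear form, contradicting the hypothesis; for $k$ even and $a'<0$ no real $u$ exists, but then $f^k\ge 0$ and $a'(\bc^\top\x)^{kp}\le 0$ force $f\equiv 0$, again a contradiction.

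Given the closure property, the three strict inequalities of \eqref{fineq} fall out. Applying Lemma~\ref{symprodinlem}(f)(ii) to the pair $(f^k,f^l)$ yields the first: equality would require $f^k$ and $f^l$ to be proportional to a common linear form raised to powers $kp$ and $lp$, which is excluded by closure. Setting $l=k$ gives the second. For the third, combine Lemma~\ref{symprodinlem}(f)(iv), which gives $\|f\|_\sigma^k=\|f^k\|_\sigma$, with \eqref{basfineq} applied to $f^k$: at a unit-sphere maximizer $\x$ of $|f^k|$, the equality $\|f^k\|_\sigma=\|f^k\|_{HS}$ would read $|f^k(\x)|=\|f^k\|_{HS}\|\x\|_2^{kp}$, and the equality case of Lemma~\ref{symprodinlem}(f)(i) would force $f^k$ to be a $kp$-power of a linear form, contradicting closure once more.

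Finally, the nonstrict version of the first inequality in \eqref{fineq} makes the sequence $k\mapsto\log\|f^k\|_{HS}$ subadditive, so Fekete's lemma yields convergence $\|f^k\|_{HS}^{1/k}\to\rho_1(f)=\inf_{k\in\N}\|f^k\|_{HS}^{1/k}$, justifying \eqref{limfkHSnorm}. Taking $k$-th roots of the (nonstrict) third inequality and passing to the limit gives $\rho_1(f)\ge\|f\|_\sigma$. Strict inequality $\rho_1(f)<\|f^k\|_{HS}^{1/k}$ for each fixed $k$ then follows by iterating the second strict inequality along the doubling subsequence: $\rho_1(f)\le\|f^{2k}\|_{HS}^{1/(2k)}<\|f^k\|_{HS}^{1/k}$. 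The only substantive step is the closure property; everything after it is mechanical assembly of Lemma~\ref{symprodinlem} and Fekete subadditivity.
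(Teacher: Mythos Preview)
Your proof is correct and follows essentially the same route as the paper's: identify the rank-one tensor for part (a), then in part (b) use the closure property together with Lemma~\ref{symprodinlem}(f) to get strict submultiplicativity, apply Fekete's lemma for the limit, and iterate the doubling inequality for the final strict bound. The only difference is that the paper simply asserts the closure property (``$f^k$ is not proportional to a $kp$-power of a linear form'') without justification, whereas you supply a UFD argument; your case split on the sign of $a'$ is in fact unnecessary---once unique factorization in $\R[x_1,\ldots,x_n]$ gives $f=u(\bc^\top\x)^p$ with $u\in\R\setminus\{0\}$, the contradiction is already in hand---but the extra case handling does no harm.
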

\begin{proof}
(a)  Recall that $f(\x)=a(\bc^\top \x)^p$ if and only if $f(\x)=\langle a\otimes^p\bc,\otimes^p\x\rangle$.  Then $f^k(\x)=\langle a^k\otimes^{kp}\bc,\otimes^{kp}\x\rangle$.  Part (f) of Lemma \ref{symprodinlem} yields \eqref{mtheorema1}.

\noindent
(b) Suppose that $f$ is not proportional to a $p$-power of a linear form.
Then $f^k$  is not proportional to a $kp$-power of a linear form for each $ k\in\N$.
Part (f)(ii) of Lemma \ref{symprodinlem} yields that the sequence $\|f^k\|_{HS}$ is strictly submultiplicative.  That yields the first inequality in \eqref{fineq}.
Set $k=l$ to deduce the second inequality in \eqref{fineq}.
The second equality of \eqref{defnormf(x)} and part (f)(i)
yield that $\|f\|_{\sigma}<\|f\|_{HS}$.  Hence $\|f\|_{\sigma}^k=\|f^k\|_{\sigma}<\|f^k\|_{HS}$.  Fekete's subadditive lemma yields the existence of the the limit $\rho_1(f)$.
Clearly $\rho_1(f)\ge \|f\|_{\sigma}$.  The second inequality in \eqref{fineq} shows that the sequence
$\|f_{k2^l}\|_{HS}^{1/(k2^{l})}$ is a strictly decreasing sequence for $l=0,1,\ldots$.
Hence $\rho_1(f)<\|f^k\|_{HS}^{1/k}$ for each $k\in\N$.

\end{proof}

We now show that if $f$ is proportional to a power of an orthogonally diagonalizable $g$ then
 $\rho_1(f)=\|f\|_\sigma$.  We call $g\in \rP(q,n,\R)$ orthogonally diagonalizable if there exists an orthogonal matrix $Q\in \R^{n\times n}$ such that 
\begin{equation}\label{diagg}
g(Q\x)=\sum_{i=1}^n \lambda_ix_i^q, 
\end{equation}
where  $\lambda_1,\ldots,\lambda_n\in\R$.
Note that a real quadratic polynomial is orthogonally diagonalizable.

We first recall the well known lemma:
\begin{lemma}\label{HSocv}  Let $\cT\in\otimes_{j=1}^d \R^{n_j}$.  Assume that $Q_j\in\R^{n_j\times n_j}$ is an orthogonal matrix for $j\in[d]$.  Denote by $\cT'\in\otimes_{j=1}^d \R^{n_j}$ the tensor obtained from $\cT$ by the equality
\begin{equation*}
\langle T',\otimes_{j=1}^d \x_j\rangle =\langle T,\otimes_{j=1}^d (Q_j\x_j)\rangle, \x_j\in\R^{n_j}, \,j\in[d].
\end{equation*}
Then $\|\cT'\|_{HS}=\|\cT\|_{HS}$.
\end{lemma}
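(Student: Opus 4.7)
The plan is to unwind the definition of $\cT'$ in coordinates and then perform a direct Hilbert--Schmidt computation, using the defining property $Q_j^\top Q_j = I$ of each orthogonal matrix. The point is that the map $\cT \mapsto \cT'$ is the action of the tensor product $\otimes_{j=1}^d Q_j^\top$ on $\cT$, and tensor products of orthogonal operators are orthogonal with respect to the Hilbert--Schmidt inner product.

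First I would evaluate the defining identity on the standard basis vectors $\be_{i_1},\ldots,\be_{i_d}$, which yields the explicit coordinate formula
\begin{equation*}
t'_{i_1,\ldots,i_d}=\sum_{k_1,\ldots,k_d} (Q_1)_{k_1,i_1}\cdots (Q_d)_{k_d,i_d}\,t_{k_1,\ldots,k_d},
\end{equation*}
since the right-hand side of the defining equation is multilinear in the $\x_j$.

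Next I would compute $\|\cT'\|_{HS}^2=\sum_{i_1,\ldots,i_d}(t'_{i_1,\ldots,i_d})^2$, expand the square, and interchange the order of summation to get a sum over multi-indices $(k_1,\ldots,k_d)$ and $(l_1,\ldots,l_d)$ of the product
\begin{equation*}
t_{k_1,\ldots,k_d}\,t_{l_1,\ldots,l_d}\prod_{j=1}^d\Bigl(\sum_{i_j}(Q_j)_{k_j,i_j}(Q_j)_{l_j,i_j}\Bigr).
\end{equation*}
Each inner factor equals $(Q_jQ_j^\top)_{k_j,l_j}=\delta_{k_j,l_j}$ by orthogonality of $Q_j$, so the sum collapses to $\sum_{k_1,\ldots,k_d} t_{k_1,\ldots,k_d}^2=\|\cT\|_{HS}^2$, giving the claim.

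There is no real obstacle; the only subtlety is keeping the indices straight when deriving the coordinate formula for $\cT'$, and noting that orthogonality is used exactly once per mode. Alternatively, one could observe that on simple tensors $\langle \otimes_j Q_j\x_j,\otimes_j Q_j\y_j\rangle=\prod_j\langle Q_j\x_j,Q_j\y_j\rangle=\prod_j\langle \x_j,\y_j\rangle=\langle \otimes_j\x_j,\otimes_j\y_j\rangle$, extend by bilinearity to conclude that $\otimes_{j=1}^d Q_j$ preserves the Hilbert--Schmidt inner product, and then apply this to $\cT$.
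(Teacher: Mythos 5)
Your proof is correct and follows essentially the same route as the paper: a direct coordinate computation of $\|\cT'\|_{HS}^2$ in which orthogonality of each $Q_j$ is used once per mode to collapse the sums. The only difference is organizational — the paper reduces to the case of a single non-identity $Q_j$ and handles one mode at a time, whereas you treat all modes simultaneously via the Kronecker-delta identity $(Q_jQ_j^\top)_{k_j,l_j}=\delta_{k_j,l_j}$ — which is a matter of presentation, not substance.
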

\begin{proof}
First observe that it is enough to consider the case where all but one $Q_j$ is the identity matrix.  Next we can assume that $Q_j$ is the identity matrix for $j>1$.
Let $Q_1=[q_{i_{d+1},i_{1}}], i_1,i_{d+1}\in[n_1]$.  Thus
\begin{eqnarray*}
\cT=[t_{i_1,\ldots,i_d}], \quad \cT'=[t'_{i_1,\ldots,i_d}],\\
t'_{i_1,\ldots,i_d}=\sum_{i_{d+1}=1}^{n_1}q_{i_{d+1},i_1}t_{i_{d+1},i_2,\ldots,i_d},\\
\|\cT'\|_{HS}=\sum_{i_j\in[n_j],j\ge 2} \sum_{i_1=1}^{n_1} \big(\sum_{i_{d+1}=1}^{n_1}q_{i_{d+1},i_1}t_{i_{d+1},i_2,\ldots,i_d}\big)^2=\\
\sum_{i_j\in[n_j],j\ge 2} \sum_{i_1=1}^{n_1} t^2_{i_1,\ldots,i_d}=\|\cT\|^2_{HS}.
\end{eqnarray*}
The equality one before last follows from the observation that $Q^\top$ preserves the length of vectors in $\R^{n_1}$.
\end{proof}
Denote by $\rP(n,p,\R_+)\subset \rP(n,d,\R)$ the cone of homogeneous polynomials $f$ whose all coefficients $\phi_{j_1,\ldots,j_n}$ in \eqref{defpolfx} are nonnegative.  Assume that $g\in\rP(n,d,\R)$ and $g$ has the expansion \eqref{defpolfx} with coefficients $\gamma_{j_1,\ldots,j_n}$.  We say that $f\in \rP(n,p,\R_+)$ majorizes $g$, denoted as $f\succeq g$, if we have the inequality $\phi_{j_1,\ldots,j_n}\ge |\gamma_{j_1,\ldots,j_n}|$ for all nonnegative integers $j_1,\ldots,j_n$ such that $j_1+\cdots+j_n=p$.  The following inequality is straightforward:
\begin{equation}\label{majin}
\|f^k\|_{HS}\ge \|g^k\|_{HS} \textrm{ if } f\succeq g.
\end{equation}
\begin{theorem}\label{diagthm}  Assume that $f\in\rP(p,d,\R)$ is proportional to a power of an orthogonally diagonalizable $g$.  Then
\begin{equation}\label{diagthm1}   
\rho_1(f)=\|f\|_\sigma.
\end{equation}
\end{theorem}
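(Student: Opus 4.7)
The plan is to reduce the claim to showing $\rho_1(g) = \|g\|_\sigma$ for a diagonal form $g(\x) = \sum_{i=1}^n \lambda_i x_i^q$, and then to verify this reduced statement by an explicit closed-form computation of $\|g^k\|_{HS}^2$. Write $f = a g^m$ with $a \ne 0$ and $m \ge 1$. Since $f^k = a^k g^{mk}$, one has $\|f^k\|_{HS}^{1/k} = |a|\bigl(\|g^{mk}\|_{HS}^{1/(mk)}\bigr)^m$, so passing to the limit yields $\rho_1(f) = |a|\,\rho_1(g)^m$. By Lemma \ref{symprodinlem}(f)(iv), $\|f\|_\sigma = |a|\,\|g\|_\sigma^m$, so it suffices to prove $\rho_1(g) = \|g\|_\sigma$. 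Using Lemma \ref{HSocv} with the same orthogonal matrix on each of the $q$ factors, together with the obvious invariance of $\|\cdot\|_\sigma$ under orthogonal changes of variables, we may further assume $g(\x) = \sum_{i=1}^n \lambda_i x_i^q$.

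The case $q=1$ is already covered by Theorem \ref{mtheorema}(a), so assume $q \ge 2$ and set $\Lambda = \max_i |\lambda_i|$. On $\|\x\|_2 = 1$ the bound $|x_i|^q \le x_i^2$ (valid because $|x_i|\le 1$ and $q\ge 2$) gives $|g(\x)| \le \Lambda\sum_i |x_i|^q \le \Lambda$, with equality at $\x = \be_{i^*}$ for an index $i^*$ attaining the maximum; hence $\|g\|_\sigma = \Lambda$. By Theorem \ref{mtheorema}(b), $\rho_1(g) \ge \Lambda$, so only the matching upper bound remains. Expanding $g^k$ by the multinomial theorem, reading off the normalized coefficients $\phi_{qj_1,\ldots,qj_n} = \binom{k}{j_1,\ldots,j_n}\lambda_1^{j_1}\cdots\lambda_n^{j_n}/\binom{qk}{qj_1,\ldots,qj_n}$ in the form of \eqref{defpolfx} and substituting into \eqref{defnormf(x)} gives the closed expression
\[
\|g^k\|_{HS}^{2} \;=\; \sum_{j_1+\cdots+j_n=k} \frac{\binom{k}{j_1,\ldots,j_n}^{2}}{\binom{qk}{qj_1,\ldots,qj_n}}\, \lambda_1^{2j_1}\cdots\lambda_n^{2j_n}.
\]

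The main step is the combinatorial inequality
\[
\binom{qk}{qj_1,\ldots,qj_n} \;\ge\; \binom{k}{j_1,\ldots,j_n}^{q},
\]
which I would prove by an explicit injection: a $q$-tuple of length-$k$ strings, each containing $j_i$ occurrences of the symbol $i$, concatenates to a single length-$qk$ string with $qj_i$ occurrences of $i$, and the map is inverted by cutting at the positions $k, 2k, \ldots, (q-1)k$. Since $q \ge 2$ and $\binom{k}{j_1,\ldots,j_n} \ge 1$, this forces $\binom{k}{j}^2/\binom{qk}{qj} \le \binom{k}{j}^{2-q}\le 1$ for every multi-index $j$ in the sum. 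Combined with the trivial bound $\lambda_1^{2j_1}\cdots\lambda_n^{2j_n} \le \Lambda^{2k}$, the closed expression yields
\[
\|g^k\|_{HS}^{2} \;\le\; \Lambda^{2k}\, \binom{k+n-1}{n-1},
\]
so $\|g^k\|_{HS}^{1/k} \to \Lambda$, giving $\rho_1(g) \le \Lambda$ and closing the proof.

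The main obstacle I expect is isolating and proving the binomial inequality $\binom{qk}{qj} \ge \binom{k}{j}^q$; once it is in hand, everything else reduces to routine manipulation of the multinomial expansion together with Fekete's lemma for $\rho_1$. A direct Stirling estimate would also produce a polynomial-in-$k$ bound on each ratio, but the concatenation injection delivers the clean uniform bound $\le 1$ without asymptotic analysis.
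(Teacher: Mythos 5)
Your proposal is correct and follows essentially the same route as the paper: reduce via orthogonal invariance (Lemma \ref{HSocv}) to a diagonal form, compute $\|g\|_\sigma=\max_i|\lambda_i|$, write out $\|g^k\|_{HS}^2$ explicitly from \eqref{defpolfx}--\eqref{defnormf(x)}, bound each summand by $1$ using $\binom{qk}{qj_1,\ldots,qj_n}\ge\binom{k}{j_1,\ldots,j_n}^q$ with $q\ge 2$, and conclude from a polynomial-in-$k$ count of the summands. The only (harmless) differences are cosmetic: you prove the key multinomial inequality by a concatenation injection where the paper compares coefficients in $\bigl(\sum_i y_i\bigr)^{kq}=\bigl(\bigl(\sum_i y_i\bigr)^{k}\bigr)^{q}$, and you carry the $\lambda_i$ through the estimate directly instead of normalizing and invoking the majorization inequality \eqref{majin}.
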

\begin{proof}  Assume that $g\in\rP(n,q,\R),q\in\N$.  Let $g_1(\x)=g(Q\x)$ for an orthogonal matrix $Q\in\R^{n\times n}$.  Then $\|g_1^k\|_\sigma=\|g^k\|_\sigma=\|g\|_\sigma^k$.  Lemma \eqref{HSocv} yields that $\|g_1^k\|_{HS}=\|g^k\|_{HS}$.  Assume that $g$ is diagonalizable.  Then without loss of generality we can assume that $g(x)=\sum_{i=1}^n \lambda_i x_i^q$.

Suppose first that $q=1$.  (Note that in this case any linear function is equal to some $g$.)  
Then $\|g\|_\sigma=\|g\|_{HS}$.  As $g(\x)$ induced by a rank one symmetric tensor 
part (f) of Lemma \ref{symprodinlem} yields that 
\begin{equation*}
\|g^k\|_{\sigma}=\|g\|_\sigma^k=\|g^k\|_{HS}=\|g\|_{HS}^k.
\end{equation*}
Hence $\rho_1(g)=\|g\|_\sigma$.  Therefore \eqref{diagthm1}  holds for $f=ag^p$.

 Assume now that $q\ge 2$.   Without loss of generality we can assume that $\max\{|\lambda_i|, i\in[n]\}=1$.  Then 
\begin{eqnarray*}
|g(\x)|\le \sum_{i=1}^n|\x_i|^q=\|\x\|_q^q\le \|\x\|_2^{q}, \quad \|\x\|_q=\big(\sum_{i=1}^n |x_i|^q\big)^{1/q}.
\end{eqnarray*}
The last inequality follows from the well known fact that $\|\x\|_q$ is a decreasing function in $q>0$ for a fixed $\x$.  Hence $\|g\|_\sigma=|g((1,0,\ldots,0)^\top)=1$.
It is left to show that $\rho_1(g)=1$.  In view of \eqref{majin} it is enough to assume that  $g=\sum_{i=1}^n x_i^q$.  We claim that 
\begin{eqnarray*}
g^k=\sum_{j_i+1\in[k+1], i\in[n], j_1+\ldots+j_n=k} \frac{k!}{j_1!\cdots j_n!}x_1^{j_1q}\cdots x_n^{j_nq},\\
n\le \|g^k\|_{HS}^2=\\\sum_{j_i+1\in[k+1], i\in[n], j_1+\ldots+j_n=k} \big(\frac{k!}{j_1!\cdots j_n!}\big)^2\big(\frac{(j_1q)!\cdots (j_nq)!}{(kq)!}\big)\le\\ {n+k-1\choose k-1}={n+k-1\choose n}\le (n+k-1)^n.
\end{eqnarray*}
Indeed, the first equality is the multinomial expansion of $g$.  As $g^k\succeq\sum_{i=1}^n x_i^{kq}$ we deduce that $\|g^k\|_{HS}^2\ge n=\|\sum_{i=1}^n x_i^{kq}\|_{HS}^2$.  To find the exact formula for $\|g^k\|_{HS}$ we need to use the first equality in \eqref{defnormf(x)}.  Let $f=g^k$.  Observe that $f$ has degree $kq$.  Hence nonzero monomial $x^{k_1}\cdots x^{k_n}$ in the expansion of $f$ is of the form $x_1^{j_1q}\cdots x_n^{j_nq}$.  Use  the first equality in \eqref{defpolfx} to deduce that 
\begin{equation*}
\phi_{kj_1,\ldots,kj_n}=\big(\frac{k!}{j_1!\cdots j_n!}\big)\big(\frac{(j_1q)!\cdots (j_nq)!}{(kq)!}\big).
\end{equation*}
Use the second equality in \eqref{defpolfx} to deduce the formula of $\|f\|_{HS}^2$ as given above.   We next observe the inequality
\begin{eqnarray*}
\big(\frac{k!}{j_1!\cdots j_n!}\big)^2\le \big(\frac{k!}{j_1!\cdots j_n!}\big)^q\le \frac{(kq)!}{(j_1q)!\cdots (j_nq)!}{(kq)!}.
\end{eqnarray*}
As $q\ge 2$ and all the multinomial coefficients are integer the first inequality is obvious.  The second inequality is shown as follows.  Let $y_i=x_i^q, i\in[n]$.  Then
$\frac{k!}{j_1!\cdots j_n!}$ is the coefficient of $y_1^{j_1}\cdots y^{j_n}$ in the expansion of $(\sum_{i=1}^n y_i)^k$.  Hence
\begin{equation*}
(\sum_{i=1}^n y_i)^{kq}=((\sum_{i=1}^n y_i)^{k})^{q}\succeq
  \big(\frac{k!}{j_1!\cdots j_n!}y_1^{j_1}\cdots y^{j_n}\big)^q,
\end{equation*}
which proves the second inequality.
The third inequality comes from the following observations.  In the formula f or $\|g^k\|_{HS}^2$ we showed that each summand is at most $1$.  We claim that the number of summands in the expansion of $g^k$ is ${n+k-1\choose k-1}$.  Indeed, the number of monomials of degree $k$ in $n$ variables is the dimension of $\rS^k\R^n$, which is equal to ${n+k-1\choose k-1}$ \cite{FW20}.  Clearly ${n+k-1\choose k-1}={n+k-1\choose n}\le (n+k-1)^n$.  Hence $n^{1/2k}\le \|g^k\|_{HS}^{1/k}\le (n+k-1)^{n/2k}$. Let $k\to\infty$ to deduce the equality $\rho_1(g)=1$.  

Assume now that $f=g^l$.   Use the above inequalities to deduce that $\rho_1(f)=1=\|f\|_{\sigma}$.  Hence $\rho_1(af)=|a|\rho_1(f)=\|af\|_{\sigma}$.
\end{proof}

\section{Polynomial maps}\label{sec:polmap}
Assume that $d\ge 3$ is an integer and $k\in[d-1]$.  Suppose that $\cT=[t_{i_1,\ldots,i_k}]\in \otimes_{j=1}^d \R^{n_j}$ 
 and $\cA=[a_{i_{k+1},\ldots,i_d}]\in\otimes_{j=k+1}^d \R^{n_j}$ be $d-k$ tensor.  Then 
\begin{equation*}
\cT\times \cA=\sum_{i_j\in[n_j], j> k} t_{i_1,\ldots,i_d}a_{i_{k+1},\ldots,i_d}\in\otimes_{j=1}^k \R^{n_j}
\end{equation*}
is the contraction of $(\cT,\cA)$ on the indices of $\cA$. 

Assume that $F\in\R^{n\times n}$ is a symmetric matrix and $f(\x)=\x^\top F\x$.
Then $F\x=\frac{1}{2}\nabla f(\x)$.  Assume that $\cF$ is a $p+1$-symmetric tensor in $n$-real variables and $f(\x)=\langle \cF,\otimes^{p+1}\x\rangle$.
As in \cite{FW20} the analog $F\x$ is
\begin{equation}\label{defFx}
\begin{aligned}
\bF(\x)=(F_1(\x),\ldots,F_n(\x))^\top=\cF\times \otimes^{p}\x,\\
F_i(\x)=\sum_{i_2,\ldots,i_p\in[n]} f_{i,i_2,\ldots,i_p}x_{i_2}\cdots x_{i_p}, \quad i\in[n].
\end{aligned}
\end{equation}
Note that each $F_i(\x)$ is a homogeneous polynomial of degree $p+1$.  It is straightforward to show \cite{FW20}
\begin{equation*}
\bF(\x)=\frac{1}{p+1}\nabla f(\x).
\end{equation*}

We now consider polynomial maps $\bF=(F_1(\x),\ldots,F_m(\x))^\top:\R^n\to \R^m$,
where $F_1,\ldots,F_m\in\rP(p,n,\R)$.
Denote by $\R^m\otimes \rS^{p}\R^n$ the subspace of $\R^m\otimes^{p}\R^n$ of all tensors $\cF=[f_{i_1,\ldots,i_{p+1}}]$ which are symmetric with respect to the last $p$ indices.  Observe that
$\bF(\x)=\cF\times \otimes^{p}\x$.
For $p=1$ we have $\R^m\otimes \rS^{1}\R^n=\R^{m\times n}$.  
  
As in the scalar case $m=1$ there is $1-1$ correspondence between $\bF$ and $\cF$.  Define 
\begin{equation}\label{defHSbF}
\|\bF\|_{HS}:=\sqrt{\sum_{i=1}^m \|F_i\|_{HS}^2}.
\end{equation}
Clearly $\|\bF\|_{HS}=\|\cF\|_{HS}$.

A simple modification of Banach's result \cite{Fri13} yields 
\begin{equation*}
\|\cF\|_\sigma=\max\{|\langle\cF,\y\otimes^{p}\x\rangle|, \y\in\R^m,\x\in\R^n,\|\y\|_2=\|\x\|_2=1\}.
\end{equation*}
Taking in account that $\cF\times\otimes^{p}\x=\bF(\x)$ we deduce an analogous result to \cite[(8)]{FW20} 
\begin{equation}\label{Fnormform}
\|\cF\|_\sigma=\max\{\frac{\|\bF(\x)\|_2}{\|\x\|_2^{p}}, \x\ne \0\}.
\end{equation}

Given a tensor $\cT\in\R^m\otimes^p\R^n$ we can define a partial symmetrization 
map $\psym:\R^m\otimes^p\R^n\to \R^m\otimes\rS^d\R^n$, which is a partial symmetrization of $\cT$ with respect to the last $p$ coordinates of of $\cT$.  That is
$\cF=\psym(\cT)$ if 
\begin{equation*}
f_{i_{1},i_2,\ldots,i_p}=\frac{1}{p!}\sum_{\sigma\in\Sigma_p}t_{i_1,i_{\sigma(1)+1},\ldots,i_{\sigma(p)+1}}.
\end{equation*}
Note that 
\begin{equation*}
\cT\times\otimes^p\x=\psym(\cT)\times\otimes^p\x, \quad \x\in\R^n.
\end{equation*}

Given $\cF\in\R^m\otimes \rS^{p}\R^n,\cG\in \R^l\otimes \rS^{q}\R^m$ we can define a composition (``product'') $\cG\circ \cF\in \R^{l}\otimes \rS^{pq}\R^n$ which corresponds to the composition
\begin{equation*}
(G\circ F)(\x)=G(F(\x))=(\cG\circ\cF)\times \otimes^{pq}\x.
\end{equation*}
That is $\cG\circ\cF=\cH\in \R^l\otimes \rS^{pq}\R^n$ such that 
\begin{equation*}
\cH\times(\otimes^{pq}\x)=\bG(\bF(\x)).
\end{equation*}
Note that this product is associative:
\begin{equation*}
(\cG\circ\cF)\circ\cE=\cG\circ(\cF\circ\cE).
\end{equation*}

Assume that $m=n$, that is, $\bF:\R^n \to \R^n$.  Denote
\begin{equation*}
\cF^{\circ k}:=\underbrace{\cF\circ\cdots\circ\cF}_k, \quad k\in\N.
\end{equation*}
So $\cF^{\circ k}$ corresponds of the $k$-th iteration of the map $\bF$ which is also denoted as $\bF^{\circ k}$.  Then $\bF^{\circ 0}(\x)=\x$, and $\cF^{\circ 0}$ is the identity matrix in $\R^n\otimes\R^n$.

The following lemma generalizes some parts Lemma \ref{symprodinlem}.
\begin{lemma}\label{psymmaj}  Assume that $\cT\in\R^m\otimes (\otimes^p\R^n), \cF\in \R^m\otimes\rS^p\R^n\setminus\{0\}, \cG\in\R^l\otimes\rS^q\R^m\setminus\{0\}$.  
Denote $\bF(\x)=\cF\times\otimes^p\x$ and $\bG(\x)=\cG\times\otimes^q\x$.
Then
\begin{enumerate}[(a)]
\item $\|\psym(\cT)\|_{HS}\le \|\cT\|_{HS}$.  Equality holds if and only if $\psym(\cT)=\cT$.
\item $\|\bG\circ\bF\|_{HS}\le \|\bG\|_{HS}\|\bF\|_{HS}^q$.  Equality holds 
if and only there exists 
\begin{eqnarray*}
\ba=(a_1,\ldots,a_n)^\top\in\R^n, 
\|\ba\|_2=1,\\  \bb=(b_1,\ldots,b_m)^\top,\in\R^m\setminus\{\0\},\\
 \bc=(c_1,\ldots,c_l)^\top,\in\R^l\setminus\{\0\}
\end{eqnarray*}
such that
\begin{equation*}
\begin{aligned}
F_i(\x)=b_i(\ba^\top\x)^p,\quad  i\in[m],\, \x\in\R^n,\\
G_j(\y)=c_j(\bb^\top \y)^q,\quad j\in[l],\, \y\in \R^m.
\end{aligned}
\end{equation*}
\item
\begin{equation}\label{bascompin1}
\|\cG\circ\cF\|_{\sigma}\le \|\cG\|_\sigma\|\cF\|_\sigma^{q}.
\end{equation}
Equality holds if and only if there exists $\x\in\R^n,\|\x\|_2=1$ such that $|\|\bF(\x)\|_2=\|\bF\|_{\sigma}$, and for $\y=\frac{1}{\|\bF\|_{\sigma}}\bF(\x)$ the equality  
$\|\bG(\y)\|_2=\|\bG\|_{\sigma}$ holds.
\end{enumerate}
\end{lemma}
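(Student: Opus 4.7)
Part (a) is the direct partial-symmetrization analog of Lemma \ref{symprodinlem}(a). For each fixed first index $i\in[m]$, $\psym(\cT)$ replaces the slice $[t_{i,i_1,\ldots,i_p}]$ by its average over the $p!$ permutations of the last $p$ indices, so the Cauchy--Schwarz estimate $\bigl|\tfrac{1}{p!}\sum_\sigma a_\sigma\bigr|^2\le\tfrac{1}{p!}\sum_\sigma|a_\sigma|^2$ used in Lemma \ref{symprodinlem}(a), applied slice-wise and summed, yields $\|\psym(\cT)\|_{HS}\le\|\cT\|_{HS}$, with equality iff every slice has all its permuted entries equal, i.e.\ $\cT=\psym(\cT)$. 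Conceptually, $\psym$ is the orthogonal projection of $\R^m\otimes(\otimes^p\R^n)$ onto $\R^m\otimes\rS^p\R^n$.

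For (b), the plan is to realize $\cG\circ\cF$ as a partial symmetrization of an explicit contraction. Define $\cH=[h_{i,i_1,\ldots,i_{pq}}]\in\R^l\otimes(\otimes^{pq}\R^n)$ by
\begin{equation*}
h_{i,i_1,\ldots,i_{pq}}=\sum_{j_1,\ldots,j_q\in[m]}g_{i,j_1,\ldots,j_q}\prod_{t=1}^{q}f_{j_t,\,i_{(t-1)p+1},\ldots,\,i_{tp}}.
\end{equation*}
A direct expansion gives $\cH\times\otimes^{pq}\x=\bG(\bF(\x))$, so by uniqueness $\cG\circ\cF=\psym(\cH)$, and part (a) yields $\|\cG\circ\cF\|_{HS}\le\|\cH\|_{HS}$. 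I would then apply the Cauchy--Schwarz inequality in $\bj=(j_1,\ldots,j_q)\in[m]^q$ to each entry $h_{i,i_1,\ldots,i_{pq}}$ and sum the square over the free indices: the two factors decouple completely, since $(\sum_\bj|g_{i,\bj}|^2)$ depends only on $i$ while $(\sum_\bj\prod_t|f_{j_t,\cdot}|^2)$ depends only on $(i_1,\ldots,i_{pq})$ and further factors as $\prod_t\sum_{j_t}|f_{j_t,\cdot}|^2$. Summation then yields $\|\cH\|_{HS}^2\le\|\cG\|_{HS}^2\,\|\bF\|_{HS}^{2q}$, completing the inequality.

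The equality case in (b) is the main obstacle. Equality forces simultaneously (i) $\cH=\psym(\cH)$, i.e.\ symmetry in the last $pq$ indices, and (ii) for every $(i,i_1,\ldots,i_{pq})$ the $\bj$-indexed vectors $(g_{i,\bj})_\bj$ and $\bigl(\prod_{t=1}^q f_{j_t,i_{(t-1)p+1},\ldots,i_{tp}}\bigr)_\bj$ are proportional. The right-hand vector is a rank-one outer product of $q$ vectors in $\R^m$ coming from the grouped index blocks, so (ii) constrains each slice of $\cG$ to be a scalar multiple of a single rank-one symmetric $q$-tensor on $[m]^q$; combined with the $q$-symmetry built into $\cG$, this forces $\cG=\bc\otimes(\otimes^q\bb)$ for some $\bc\in\R^l$, $\bb\in\R^m$. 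Feeding this back into (i) and running the same type of induction on $p$ used in the proof of Lemma \ref{symprodinlem}(e), I would deduce $\cF=b'\otimes^p\ba\otimes$-type rank-one, which after rescaling to $\|\ba\|_2=1$ gives the asserted form $F_i(\x)=b_i(\ba^\top\x)^p$, $G_j(\y)=c_j(\bb^\top\y)^q$. The converse direction is a direct computation: for such $\bF,\bG$ one checks $\bG(\bF(\x))=(\bb^\top\bb)^{q}(\ba^\top\x)^{pq}\bc$, whose HS norm equals $\|\bc\|_2\|\bb\|_2^{2q}\|\ba\|_2^{pq}=\|\bG\|_{HS}\|\bF\|_{HS}^q$.

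Part (c) follows immediately from the variational characterization \eqref{Fnormform}. For any $\x\in\R^n\setminus\{\0\}$, applying \eqref{Fnormform} first to $\bG$ at the point $\bF(\x)\in\R^m$ and then to $\bF$ at $\x$ gives
\begin{equation*}
\|(\bG\circ\bF)(\x)\|_2=\|\bG(\bF(\x))\|_2\le\|\cG\|_\sigma\,\|\bF(\x)\|_2^{q}\le\|\cG\|_\sigma\,\|\cF\|_\sigma^{q}\,\|\x\|_2^{pq}.
\end{equation*}
Dividing by $\|\x\|_2^{pq}$ and invoking \eqref{Fnormform} once more with $\bG\circ\bF$ in place of $\bF$ yields \eqref{bascompin1}. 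Equality in \eqref{bascompin1} requires equality in both bounds above at a common unit $\x$: the second step gives $\|\bF(\x)\|_2=\|\cF\|_\sigma$, and the first, applied to the unit vector $\y=\bF(\x)/\|\cF\|_\sigma$, gives $\|\bG(\y)\|_2=\|\cG\|_\sigma$, which is precisely the stated condition.
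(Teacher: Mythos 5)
Your parts (a) and (c) match the paper's arguments (slice-wise Cauchy--Schwarz for (a); the chain $\|\bG(\bF(\x))\|_2\le\|\cG\|_\sigma\|\bF(\x)\|_2^q\le\|\cG\|_\sigma\|\cF\|_\sigma^q\|\x\|_2^{pq}$ for (c)). For (b) you take a genuinely different route. The paper reduces to the scalar case $l=1$, expands $g(\bF)$ over monomials, applies the product inequality of Lemma \ref{symprodinlem}(f)(ii) to each $F_1^{j_1}\cdots F_m^{j_m}$, and then combines the triangle inequality with Cauchy--Schwarz in the coefficients $\gamma_{j_1,\ldots,j_m}$ and the multinomial identity to get $\|g(\bF)\|_{HS}\le\|g\|_{HS}\|\bF\|_{HS}^q$, treating $l>1$ coordinatewise. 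You instead write down the explicit unsymmetrized composition tensor $\cH$ with $\cH\times\otimes^{pq}\x=\bG(\bF(\x))$, so that $\cG\circ\cF=\psym(\cH)$, and obtain the bound from part (a) plus a single entrywise Cauchy--Schwarz in the contraction multi-index $(j_1,\ldots,j_q)\in[m]^q$; the factorization of the second factor as a $q$-fold product gives $\|\cH\|_{HS}\le\|\cG\|_{HS}\|\bF\|_{HS}^q$. This is correct, handles all $l$ at once, avoids the multinomial bookkeeping, and localizes the equality analysis into two clean conditions: (i) $\cH=\psym(\cH)$ and (ii) entrywise Cauchy--Schwarz equality. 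The paper's route, by contrast, leans on the already-established scalar equality case, which makes the rank-one form of $\cG$ drop out more directly.

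One refinement is needed in your equality sketch: the roles you assign to (i) and (ii) are off. Condition (ii) alone gives more than the rank-one form of $\cG$: taking all $q$ index blocks equal forces each nonzero slice $(g_{i,\cdot})$ to be proportional to $\otimes^q u$, where $u=(f_{j,i_1,\ldots,i_p})_{j\in[m]}$ is the block vector, and comparing different blocks forces all nonzero such vectors to be proportional to a common $\bb$; hence (ii) already yields both $\cG=\bc\otimes(\otimes^q\bb)$ and $\cF=\bb\otimes\Phi$ for some nonzero $\Phi\in\rS^p\R^n$. The second conclusion cannot be recovered from (i): once $\cG=\bc\otimes(\otimes^q\bb)$ is known, $\cH=\bc\otimes\psi^{\otimes q}$ with $\psi=\sum_j b_j\cF_j$, and (i) only forces this single combination $\psi$ to be a $p$-th power, not each $F_i$ (take $\bb=(1,0,\ldots,0)^\top$, $F_1=(\ba^\top\x)^p$, $F_2$ arbitrary: (i) holds while equality in the norm inequality fails). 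With $\cF=\bb\otimes\Phi$ in hand, (i) says $\Phi^{\otimes q}$ is fully symmetric, and the induction of Lemma \ref{symprodinlem}(e) then gives $\Phi=a\otimes^p\ba$ provided $q\ge 2$; for $q=1$ and $p\ge 2$ that step is vacuous and equality indeed holds for every $\cF=\bb\otimes\Phi$, a boundary case in which the stated characterization (and the paper's own equality argument) implicitly assumes $q\ge 2$. So your plan is completable, but the rank-one conclusion for $\cF$ must be extracted from (ii) together with (i), not from (i) alone.
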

\begin{proof} (a) The proof is analogous to the proof part (a) of Lemma \ref{symprodinlem}.

\noindent
(b) We first consider the case where $m=1$.  Then $\bG(\x)=g(\x)\in \rP(q,\R)$.  
Consider first the case $g_{j_1,\ldots,j_m}(\y)=y_1^{j_1}\cdots y_m^{j_m}$, where $j_1,\ldots,j_m$ are nonnegative integers such that $j_1+\cdots+j_m=q$.  Part (f)(ii) of Lemma \ref{symprodinlem} yields 
\begin{equation}\label{monomcase}
\|g_{j_1,\ldots,j_m}(\bF)\|_{HS}=\|F_1^{j_1}\cdots F_m^{j_m}\|_{HS}\le \|F_1\|_{HS}^{j_1}\cdots\|F_m\|_{HS}^{j_m}.
\end{equation}
We now consider the equality case in the above inequality.  We need to discuss those $F_{j_r}$ such that $j_r\ge 1$ for $r\in[m]$.  If $F_{j_r}$ is a zero polynomial for some $j_r\ge 1$ then all the quantities in \eqref{monomcase} are zero.   So assume that $F_{j_r}\ne 0$ if $j_r\ge 1$.  The the equality case in part (f)(ii) of Lemma \ref{symprodinlem} yields that equality holds if and only if $F_{j_r}=b_{j_r}(\ba^\top\x)^p$ for $\ba\in\R^m, \|\ba\|_2=1, b_{j_r}\ne 0$ if $j_r\ge 1$.

Recall that a general $g\in\rP(q,n,\R)$ is of the form
\begin{equation*}
g(\y)=\sum_{j_1+1,\ldots,j_m+1\in[q+1], \sum_{r=1}^m j_r=q} \frac{q!}{j_1!\cdots j_m!} \gamma_{j_1,\ldots,j_m} g_{j_1,\ldots,j_m}(\y).
\end{equation*}
Then
\begin{eqnarray*}
\|g(\bF))\|_{HS}\le \\
\sum_{j_1+1,\ldots,j_m+1\in[q+1], \sum_{r=1}^m j_r=q} \frac{q!}{j_1!\cdots j_m!} |\gamma_{j_1,\ldots,j_m}| \|g_{j_1,\ldots,j_m}(\bF)\|_{HS}\le\\
\sum_{j_1+1,\ldots,j_m+1\in[q+1], \sum_{r=1}^m j_r=q} \big(\sqrt{\frac{q!}{j_1!\cdots j_m!}}|\gamma_{j_1,\ldots,j_m}|\big)\\
\big(\sqrt{\frac{q!}{j_1!\cdots j_m!}}\|F_1\|^{j_1}\cdots\|F_{j_m}\|_{HS}^{j_m}\big)\le\\
\|g\|_{HS}\big( \sum_{j_1+1,\ldots,j_m+1\in[q+1], \sum_{r=1}^m j_r=q} \frac{q!}{j_1!\cdots j_m!}\|F_1\|^{2j_1}\cdots\|F_{m}\|_{HS}^{2j_m}\big)^{1/2}=\\
\|g\|_{HS}\|\bF\|_{HS}^q.
\end{eqnarray*}
Equality in the last inequality holds if there exists $b\in\R$ such that 
\begin{equation*}
|\gamma_{j_1,\ldots,j_m}|=|b|\|F_1\|^{j_1}\ldots\|F_m\|^{j_m}, j_k+1\in[q], k\in[m],j_1+\cdots+j_m=q.
\end{equation*}
Equality in the first inequality holds if the all polynomials 

\noindent
$ \frac{q!}{j_1!\cdots j_m!} \gamma_{j_1,\ldots,j_m} g_{j_1,\ldots,j_m}(\bF(\y))$ are proportional with a nonnegative constant.  Recall also the equality case in \eqref{monomcase}.
After permuting the coodinates of $F_1,\ldots,F_m$ we can assume that $F_1,\ldots,F_l\ne 0$, and $F_{l+1},\ldots,F_m=0$ for $l\in[m]$.  Hence we deduce that $g$ depends only on $y_1,\ldots,y_l$.  Equality in \eqref{monomcase} yields that
$F_i=b_i(\ba^\top \x)^q, \|\ba\|_2=1$ where $b_i\ne 0$ for $i\in[l]$ and $b_i=0$ for $i>l$.
Hence equality in 
\begin{equation*}
\|g(\bF))\|_{HS}\le \|g\|_{HS}\|\bF\|_{HS}^q
\end{equation*}
holds if and only if $\gamma_{j_1,\ldots,j_m}=cb_1^{j_1}\cdots b_m^{j_m}$ for all allowable $j_1,\ldots,j_m$.  That is, $g(\y)=c(\bb^\top\y)^q$.  This proves the equality case for $l=1$.  

Similar arguments apply for the case $l>1$. 

\noindent
(c)
Clearly
\begin{eqnarray*}
\|(\cG\circ \cF)(\x)\|_2=\|\bG(\bF(\x)\|_2\le \|\cG\|_\sigma \|\bF(\x)\|_2^{q}\le 
\|\cG\|_\sigma  \|\cF\|_\sigma^{p}\|\x\|_2^{pq}.
\end{eqnarray*}
Hence \eqref{bascompin1} holds.  Equality holds if and only if there exists $\x\in\R^n,\|\x\|_2=1$ such that $|\|\bF(\x)\|_2=\|\bF\|_{\sigma}$, and for $\y=\frac{1}{\|\bF\|_{\sigma}}\bF(\x)$ the equality  
$\|\bG(\y)\|_2=\|\bG\|_{\sigma}$ holds.
\end{proof}

We first state well known results for square real matrices, which generalizes \eqref{upbdsymmat}. Recall that the spectral radius of $T\in\C^{n\times n}$, denoted as $\rho(T)$, is the maximum of the modulus of the eigenvalues of $T$.
\begin{lemma}\label{specradmat}  Let $F\in\R^{n\times n}\setminus\{0\}$.  Then 
\begin{enumerate}[(a)]
\item Assume that $F$ is rank-one symmetric matrix $a \ba\ba^\top, \|\ba\|_2=1, a\ne 0$.  Then $\|F^k\|_{HS}=|a|^k, k\in\N$.
\item The sequence $\|F^k\|_{HS}$ is submultiplicative for $k\in\N$.  
Suppose that $F$ is not a rank one symmetric matrix.  
Then $\|F^{k+1}\|_{HS}< 
\|F^k\|_{HS}\|F\|_{HS}$ for $k\in\N$.  Furthermore $\|F^{k+l}\|_{HS}=\|F^k\|_{HS} \|F^l\|_{HS}$ if and only if $\min(k,l)\ge 2$ and $F^{\min(k,l)}$ is a rank one symmetric matrix.
\item The spectral radius of $F$ is equal to $\lim_{k\to\infty}\|F^k\|^{1/k}_{HS}$.  If $F$ is symmetric then $\|F\|_{\sigma}=\rho(F)$.
\end{enumerate}
\end{lemma}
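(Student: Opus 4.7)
My plan is to dispatch (a) and (c) with standard tools and concentrate the bulk of the work on the equality analysis in (b). For (a), the identity $\ba^\top\ba=1$ gives $F^k=a^k\ba\ba^\top$ by induction, and $\|\ba\ba^\top\|_{HS}^2=(\ba^\top\ba)^2=1$ yields $\|F^k\|_{HS}=|a|^k$. For (c), the equality $\rho(F)=\lim_{k\to\infty}\|F^k\|_{HS}^{1/k}$ is Gelfand's formula, which follows from submultiplicativity of $\|\cdot\|_{HS}$ combined with Fekete's subadditive lemma applied to $\log\|F^k\|_{HS}$ and the eigenvector lower bound $\rho(F)^k\le\|F^k\|_{HS}$; when $F$ is symmetric, the spectral theorem writes $F=Q\diag(\lambda_1,\ldots,\lambda_n)Q^\top$ with $Q$ orthogonal, so the singular values of $F$ are $|\lambda_i|$ and $\|F\|_\sigma=\max_i|\lambda_i|=\rho(F)$.

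For part (b), submultiplicativity comes from writing $\|QP\|_{HS}^2=\sum_j\|Q\bp_j\|_2^2$ over the columns $\bp_j$ of $P$ and bounding each summand by $\|Q\bp_j\|_2\le\|Q\|_\sigma\|\bp_j\|_2\le\|Q\|_{HS}\|\bp_j\|_2$, which iterated yields $\|F^{k+l}\|_{HS}\le\|F^k\|_{HS}\|F^l\|_{HS}$. Equality throughout this chain forces both (i) $\|Q\|_\sigma=\|Q\|_{HS}$, hence $\rank Q\le 1$ (from $\|Q\|_{HS}^2=\sum_i\sigma_i(Q)^2$), and (ii) every nonzero column of $P$ is parallel to the right singular vector of $Q$. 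Setting $Q=F^m$ with $m=\min(k,l)$, write $F^m=\beta\bu\bv^\top$; condition (ii) applied to $P=F^{\max(k,l)}$ forces the columns, which are multiples of $F^{|k-l|}\bu$, to be parallel to $\bv$. The identity $F\cdot F^m=F^m\cdot F$ combined with $F^m=\beta\bu\bv^\top$ then yields $(F\bu)\bv^\top=\bu(F^\top\bv)^\top$, which (provided $F^{m+1}\ne 0$) forces $F\bu=\alpha\bu$ and $F^\top\bv=\alpha\bv$ for a common scalar $\alpha$; consequently $F^j\bu\parallel\bu$ for all $j\ge 0$, and together with $F^{|k-l|}\bu\parallel\bv$ we conclude $\bu\parallel\bv$, so $F^m$ is symmetric rank one. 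Specialized to $m=1$ this propagates the symmetry to $F$ itself, contradicting the hypothesis of the strict-inequality clause and establishing $\|F^{k+1}\|_{HS}<\|F^k\|_{HS}\|F\|_{HS}$. The converse direction of the ``furthermore'' clause follows by applying (a) to $F^m$.

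The main obstacle is giving a clean characterization of the equality case in (b), especially justifying the ``$\min(k,l)\ge 2$'' threshold: a power $F^m$ with $m\ge 2$ can be symmetric rank one without $F$ being so --- for instance, $F=e_1e_1^\top+e_2e_3^\top$ satisfies $F^2=e_1e_1^\top$ --- so the condition is strictly weaker than requiring $F$ itself to be symmetric rank one. A minor subtlety to flag is the nilpotent case $F^k=0$, where both sides of the inequality vanish trivially without $F^k$ being a nonzero rank-one symmetric matrix; the statement implicitly absorbs the zero matrix into the ``rank one'' convention by allowing $a=0$.
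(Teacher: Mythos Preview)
Your arguments for (a) and (c) are correct and match the paper's (Gelfand's formula plus the spectral theorem). For (b) your route is correct but genuinely different and more elementary: the paper obtains submultiplicativity and its equality case by specializing the tensor inequality of Lemma~\ref{psymmaj}(b) to matrices, and then invokes the minimal polynomial of $F$ --- writing it as $(z-a)z^{k'}$ with $2\le k'\le k$ --- to deduce $F^l=a^l\ba\ba^\top$ from $F^k=a^k\ba\ba^\top$ whenever $l\ge k$. You instead analyze equality in the column decomposition $\|QP\|_{HS}^2=\sum_j\|Q\bp_j\|_2^2$ directly and extract the eigenvector relation $F\bu=\alpha\bu$ from the commutation $FF^m=F^mF$. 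Your approach is self-contained and bypasses the tensor machinery; the paper's minimal-polynomial step is slicker for the converse.

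One point needs tightening: your converse (``apply (a) to $F^m$'') only controls $\|(F^m)^j\|_{HS}=\|F^{jm}\|_{HS}$, whereas you need $\|F^l\|_{HS}$ and $\|F^{k+l}\|_{HS}$ for $l$ not necessarily a multiple of $m=\min(k,l)$. The fix is already implicit in your forward argument: from $FF^m=F^mF$ with $F^m=c\,\ba\ba^\top$ you get $F\ba=\alpha\ba$ (the symmetric instance of your $(F\bu)\bv^\top=\bu(F^\top\bv)^\top$ identity), hence $F^j=c\,\alpha^{j-m}\ba\ba^\top$ for every $j\ge m$; since $F^m\ba=c\ba=\alpha^m\ba$ forces $c=\alpha^m$, this gives $\|F^j\|_{HS}=|\alpha|^j$ for all $j\ge m$, and the product identity follows. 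This is exactly what the paper's minimal-polynomial argument encodes. Your nilpotent flag is apt and is not addressed by the paper.
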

\begin{proof}
(a) is straightforward.

\noindent
(b) The submultiplicativity of the sequence $\|F^k\|_{HS}$ is well known and follows from part (b) of Lemma \ref{psymmaj}.  The inequality $\|F^{k+1}\|_{HS}< 
\|F^k\|_{HS}\|F\|_{HS}$ follows from part (b) of Lemma \ref{psymmaj} and the assumption that $F$ is not a symmetric rank one matrix.  

Assume that $\|F^{k+l}\|_{HS}=\|F^k\|_{HS} \|F^l\|_{HS}$.  Then the inequality $\|F^{k+1}\|_{HS}< \|F^k\|_{HS}\|F\|_{HS}$ ylelds that $\min(k,l)\ge 2$.  Assume that $l\ge k$.  Then  part (b) of Lemma \ref{psymmaj}  yields that $F^k$ is a rank one symmetric matrix $a^k\ba\ba^\top, \|\ba\|_2=1$.  That is, the characteristic polynomial of $F$ is $(z-a)z^{n-1}$ and the minimal polynomial is $(z-a) z^{k'}$, where $2\le k'\le k$ \cite[Chapter 3]{HJ}.  As $l\ge k$ we obtain that $F^l=a^l \ba\ba^\top$.  Hence equality $\|F^{k+l}\|_{HS}=\|F^k\|_{HS} \|F^l\|_{HS}$ holds.

\noindent
The equality  $\rho(F)=\lim_{k\to\infty}\|F^k\|^{1/k}_{HS}$ is the Gelfand formula \cite[Corollary 5.6.14]{HJ}.  Clearly, if $F$ is symmetric then Rayleigh quotient yields $\|F\|_{\sigma}=\rho(F)$.
\end{proof}

We now give an analog of the above lemma, which is a different version of Theorems \ref{mtheorema} and \ref{diagthm}:
\begin{theorem}\label{mtheoremb}  Assume that $\cF\in\R^n\otimes\rS^p\R^n\setminus\{0\}$, where $p\ge 2$ is an integer.  Denote $\bF(\x)=\cF\times\otimes^p\x,\x\in\R^n$.  
Consider the iterations $\bF^{\circ k}$ corresponding to the tensors $\cF^{\circ k}\in \R^{n}\otimes ^{p^k}\R^n$ for $k\in\N$.  

Assume first that 
\begin{equation}\label{specformF}
\cF=a\otimes^{p+1}\ba,\quad\ba\in\R^n, \|\ba\|_2=1,\, a\ne 0.
\end{equation}
Then 
\begin{equation}\label{specformF1}
\begin{aligned}
\cF^{\circ k}=a^{(p^{k}-1)/(p-1)}\otimes^{p^k+1}\ba, \, \|\cF^{\circ k}\|_{HS}=|a|^{(p^{k}-1)/(p-1)},\\
\bF^{\circ k}(\x)=a^{(p^{k}-1)/(p-1)}(\ba^\top \x)^{p^k}\ba, \, \|\bF^{\circ k}\|_{HS}=|a|^{(p^{k}-1)/(p-1)}, \, k\in\N.
\end{aligned}
\end{equation} 

Assume second that $\cF$ is not of the form \eqref{specformF}.  
Then 
\begin{enumerate}[(a)]
\item The sequence $f_k:=\|\cF^{\circ k}\|_{HS}^{(p-1)/(p^k-1)}, k\in\N$ satisfies the following properties:
\begin{equation}\label{Fiter1} 
\begin{aligned}
f_k\le \|\cF\|_{HS}, \quad k\in\N,\\
\|\cF^{\circ (k+l)}\|_{HS}\le  \|\cF^{\circ k}\|_{HS} \|\cF^{\circ l}\|_{HS}^{p^k}, \quad k,l\in \N,\\
\textrm{ the sequence } f_{m2^l} \textrm{ is decreasing for } l=0,1,\ldots, \textrm{ and fixed } m\in\N,\\
\rho_2(\cF):=\lim_{k\to\infty} \|\cF^{\circ k}\|_{HS}^{(p-1)/(p^k-1)}.
\end{aligned}
\end{equation}
Furthermore for $k,l\in\N$  equality in the second inequality holds if and only if the following conditions are satisfied:
\begin{enumerate}[(i)]
\item $\min(k,l)\ge 2$.
\item Assume that $l\ge k\ge 2$.  Then 
\begin{equation}\label{Fcircklform}
\cF^{\circ k}=b\otimes^{p^k+1}\ba, \, \cF^{\circ l}=c\otimes^{p^l+1}\ba,  \,\|\ba\|_2=1,
\,bc^{p^k}=cb^{p^l}.
\end{equation}
\end{enumerate}
\item The sequence $\tilde f_k:=\|\cF^{\circ k}\|_{\sigma}^{(p-1)/(p^k-1)}, k\in\N$ satisfies the following properties:
\begin{equation}\label{Fiter2} 
\begin{aligned}
\tilde f_k\le \|\cF\|_{\sigma}, \quad k\in\N,\\
\|\cF^{\circ (k+l)}\|_{\sigma}\le  \|\cF^{\circ k}\|_{\sigma} \|\cF^{\circ l}\|_{\sigma}^{p^k}, \quad k,l\in \N,\\
\textrm{ the sequence } \tilde f_{m2^l} \textrm{ is decreasing for } l=0,1,\ldots, \textrm{ and fixed } m\in\N,\\
\rho_3(\cF):=\lim_{k\to\infty} \|\cF^{\circ k}\|_{\sigma}^{(p-1)/(p^k-1)},\\ 
\rho_3(\cF)\le \rho_2(\cF).
\end{aligned}
\end{equation}
Furthermore for $k,l\in\N$  equality in the second inequality holds if and only if the following conditions are satisfied:  Either $\cF^{\circ l}=0$ or there exists $\x^\star\in\R^n$ such that 
\begin{eqnarray*}
\|\bF^{\circ l}(\x^\star)\|_2=\|\cF^{\circ l}\|_\sigma, \quad \|\x^\star\|_2=1,\\
\|\bF^{\circ k}(\y)=\|\cF^{\circ k}\|_\sigma, \quad \y=\|\cF^{\circ l}\|_\sigma^{-1} \bF^{\circ l}(\x^\star).
\end{eqnarray*}
\item Assume that $\cF\in\rS^{p+1}\R^n$ and let $f(\x)=\langle \cF,\otimes^{p+1}\x\rangle$.
Then
\begin{equation}\label{Fiter3} 
\|f\|_\sigma=\|\cF\|_{\sigma}\le \rho_2(\cF).
\end{equation}
If $f$ is orthogonally diagonalizable then equality holds in the above inequality.
\end{enumerate}
\end{theorem}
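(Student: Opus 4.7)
The plan is to derive everything from Lemma \ref{psymmaj} applied to $\cF^{\circ(k+l)}=\cF^{\circ k}\circ\cF^{\circ l}$, where the outer map $\bF^{\circ k}$ has degree $p^k$ and the inner $\bF^{\circ l}$ has degree $p^l$. Parts (b) and (c) of that lemma then yield the two central inequalities
\begin{equation*}
\|\cF^{\circ(k+l)}\|_{HS}\le\|\cF^{\circ k}\|_{HS}\|\cF^{\circ l}\|_{HS}^{p^k},\qquad \|\cF^{\circ(k+l)}\|_\sigma\le\|\cF^{\circ k}\|_\sigma\|\cF^{\circ l}\|_\sigma^{p^k}.
\end{equation*}
For the rank-one case \eqref{specformF}, direct computation gives $\bF(\x)=a(\ba^\top\x)^p\ba$, and, using $\|\ba\|_2=1$, a one-line induction yields $\bF^{\circ k}(\x)=a^{S_k}(\ba^\top\x)^{p^k}\ba$ with $S_k:=(p^k-1)/(p-1)$, which gives \eqref{specformF1} at once.

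\textbf{Parts (a) and (b).}
Assume $\cF$ is not of form \eqref{specformF}. Iterating the central $HS$-inequality from $k=l=1$ gives $\|\cF^{\circ k}\|_{HS}\le\|\cF\|_{HS}^{S_k}$, hence $f_k\le\|\cF\|_{HS}$; the spectral case is identical. Setting $l=k$ in the central inequality gives $\|\cF^{\circ 2k}\|_{HS}\le\|\cF^{\circ k}\|_{HS}^{1+p^k}$, and since $(1+p^k)(p^k-1)=p^{2k}-1$, taking the $(p-1)/(p^{2k}-1)$-power yields $f_{2k}\le f_k$, so the subsequences $\{f_{m2^l}\}_l$ are decreasing (likewise $\{\tilde f_{m2^l}\}_l$). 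The bound $\rho_3\le\rho_2$ follows from $\|\cdot\|_\sigma\le\|\cdot\|_{HS}$ termwise. For the equality in (a), the equality clause of Lemma \ref{psymmaj}(b) forces $\bF^{\circ l}(\x)=(\ba^\top\x)^{p^l}\bb$ and $\bF^{\circ k}(\y)=(\bb^\top\y)^{p^k}\bc$ with $\|\ba\|_2=1$; evaluating the associativity $\cF^{\circ k}\circ\cF^{\circ l}=\cF^{\circ l}\circ\cF^{\circ k}$ in both orderings produces an equation in $\x$ that forces the vectors $\ba,\bb,\bc$ to be pairwise parallel, so $\cF^{\circ k}$ and $\cF^{\circ l}$ take the fully symmetric rank-one form \eqref{Fcircklform}; the scalar consistency $bc^{p^k}=cb^{p^l}$ then falls out from equating the coefficient of $(\ba^\top\x)^{p^{k+l}}\ba$ in the two orderings. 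The restriction $\min(k,l)\ge 2$ is forced because $\cF$ itself is assumed not of form \eqref{specformF}. The equality case in (b) is a direct transcription of Lemma \ref{psymmaj}(c).

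\textbf{Part (c).}
For symmetric $\cF\in\rS^{p+1}\R^n$ one has $\bF=\tfrac{1}{p+1}\nabla f$ and $\|\cF\|_\sigma=\|f\|_\sigma$ by Banach. At a unit-norm maximizer $\x^\star$ of $|f|$ on the sphere, Euler's identity combined with Lagrange multipliers gives $\nabla f(\x^\star)=(p+1)f(\x^\star)\x^\star$, hence $\bF(\x^\star)=\pm\|f\|_\sigma\x^\star$; induction yields $\bF^{\circ k}(\x^\star)=(\pm\|f\|_\sigma)^{S_k}\x^\star$, so $\|\cF^{\circ k}\|_\sigma\ge\|f\|_\sigma^{S_k}$. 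The reverse bound from (b) forces equality, so $\tilde f_k=\|f\|_\sigma$ identically, $\rho_3(\cF)=\|f\|_\sigma$, and hence $\|f\|_\sigma\le\rho_2(\cF)$. For the orthogonally diagonalizable case, a short induction shows $\bG^{\circ k}(\x)=Q^\top\bF^{\circ k}(Q\x)$ when $g(\x)=f(Q\x)$, so by Lemma \ref{HSocv} applied mode-by-mode, $\|\cF^{\circ k}\|_{HS}$ is orthogonally invariant; reducing to $f(\x)=\sum_i\lambda_ix_i^{p+1}$, another induction gives $F_i^{\circ k}(\x)=\lambda_i^{S_k}x_i^{p^k}$, and since the monomials $x_i^{p^k}$ are orthonormal under $\|\cdot\|_{HS}$, one obtains $\|\cF^{\circ k}\|_{HS}^2=\sum_i\lambda_i^{2S_k}$. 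The standard $\ell^{2S_k}\!\to\!\ell^\infty$ convergence then gives $f_k\to\max_i|\lambda_i|=\|f\|_\sigma$.

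\textbf{Main obstacle.}
The technical crux is the convergence of the full sequences $f_k$ and $\tilde f_k$ beyond the monotone subsequences $\{f_{m2^l}\}_l$. Taking logs, the central $HS$-inequality reads $\log f_{k+l}\le\lambda\log f_k+(1-\lambda)\log f_l$, a convex-combination bound with $\lambda=(p^k-1)/(p^{k+l}-1)\le p^{-l}$. Combined with the companion bound obtained by swapping $k$ and $l$ in the composition, the uniform upper bound $\log f_k\le\log\|\cF\|_{HS}$, and the vanishing of $\lambda$ when $l\gg k$, a careful Fekete-type argument (absorbing the residue $r$ in the division $m=nk_0+r$) will show that $\lim_k f_k=\inf_k f_k$; the same handles $\tilde f_k$. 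This is the analytic step I expect to require the most care.
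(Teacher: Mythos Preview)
Your proposal is correct and tracks the paper's proof closely: the same use of Lemma~\ref{psymmaj} for the central inequalities and their equality cases, the same commutativity $\bF^{\circ k}\circ\bF^{\circ l}=\bF^{\circ l}\circ\bF^{\circ k}$ to force parallelism of $\ba,\bb,\bc$, the same eigenvector identity $\bF(\x^\star)=\pm\|f\|_\sigma\x^\star$ in part~(c), and for the diagonal case your direct computation $\|\cF^{\circ k}\|_{HS}^2=\sum_i|\lambda_i|^{2S_k}$ is in fact a bit cleaner than the paper's reduction to $\lambda_i\equiv 1$. For the convergence step you flag as the obstacle, the paper carries out exactly your convex-combination idea, without the Fekete division: set $\alpha',\beta'$ for the $\liminf,\limsup$ of $p^{-k}\log\|\cF^{\circ k}\|_{HS}$, fix $l$ large with $p^{-l}\log\|\cF^{\circ l}\|_{HS}\le\alpha'+\varepsilon$ and $p^{-l}\beta'\le\varepsilon$, take $\limsup_k$ in the inequality $p^{-(k+l)}\log\|\cF^{\circ(k+l)}\|_{HS}\le p^{-(k+l)}\log\|\cF^{\circ k}\|_{HS}+p^{-l}\log\|\cF^{\circ l}\|_{HS}$ to get $\beta'\le p^{-l}\beta'+\alpha'+\varepsilon\le\alpha'+2\varepsilon$, and conclude $\beta'=\alpha'$.
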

\begin{proof}  Observe by induction that $\cF^{\circ k}\in \R^{n}\otimes ^{p^k}\R^n$ for $k\in\N$.
Assume that \eqref{specformF} holds.  Then $\bF(\x)=a(\ba^\top \x)^p\ba$.
We prove by induction on $k\in\N$ that $\bF^{\circ k}(\x)=a^{(p^{k}-1)(p-1)}(\ba^\top \x)^{p^k}\ba$.  Clearly for $k=1$ the above equality holds.  Assume that the equality holds for $k=l$ and let $k=l+1$.  Then 
\begin{eqnarray*}
\bF^{\circ{l+1}}(\x)=\bF^{\circ{l}}(\bF(\x))=a^{(p^{l}-1)/(p-1)}(\ba^\top F(\x))^{p^l}\ba=\\
a^{(p^{l}-1)/(p-1)+p^l}(\ba^\top\x)^{p^{l+1}}\ba=a^{(p^{l+1}-1)/(p-1)}(\ba^\top\x)^{p^{l+1}}\ba.
\end{eqnarray*}
This proves the first formula on the second line of \eqref{specformF1}.  The first formula on the first line of \eqref{specformF1} follows straightforward.  Now deduce the second formula on the first line of \eqref{specformF1}, and the last formula of \eqref{specformF1}.

We now assume that $\cF$ is not of the form \eqref{specformF}.  

\noindent
(a) Use induction and the inequality in part (b) of Lemma \ref{psymmaj}  to show the inequality 
\begin{eqnarray*}
\|\cF^{\circ(k+1)}\|_{HS}=\|\cF^{\circ k}\circ \cF\|_{HS} \le 
\|\cF^{\circ k}\|_{HS} \|\cF\|_{HS}^{p^k}\le\\ 
\|\cF\|_{HS}^{(p^k-1)/(p-1)+p^k}= \|\cF\|_{HS}^{(p^{k+1}-1)/(p-1)}.
\end{eqnarray*}
This proves the first inequality in \eqref{Fiter1}.  The second inequality in \eqref{Fiter1}
follows straightforward from part (b) of Lemma \ref{psymmaj}.

Observe next that 
\begin{eqnarray*}
f_{m2^{l+1}}^{(p^{m2^{l+1}}-1)/(p-1)}=\|\cF^{\circ (m2^{l+1})}\|_{HS}=\|\cF^{\circ (m2^l)}\circ \cF^{\circ(m2^l)}\|_{HS}\le\\ 
\|\cF^{\circ (m2^l})\|_{HS} \|\cF^{\circ (m2^l)}\|_{HS}^{p^{m2^l}}=f_{m2^l}^{(p^{m2^l}+1)(p^{m2^l}-1)/(p-1)}=f_{m2^{l}}^{(p^{m2^{l+1}}-1)/(p-1)}.
\end{eqnarray*}
Hence the sequence $f_{m2^l}$ is decreasing. 

It is left to show that the sequence $f_k, k\in\N$ converges.  
Let
\begin{eqnarray*}
0\le \alpha:=\liminf_{k\to\infty} f_k\le\beta:= \limsup_{k\to\infty} f_k\le f_1
\end{eqnarray*}
We need to show that $\alpha\ge \beta$.  Assume for simplicity of the exposition that $\alpha>0$.  Since $p\ge 2$
\begin{eqnarray*}
\alpha':=\frac{\log\alpha}{p-1}=\liminf_{k\to\infty}\frac{\log\|\cF^{\circ k}\|_{HS}}{p^k-1}=
\liminf_{k\to\infty}\frac{\log\|\cF^{\circ k}\|_{HS}}{p^k},\\
\beta':=\frac{\log\alpha}{p-1}=\limsup_{k\to\infty}\frac{\log\|\cF^{\circ k}\|_{HS}}{p^k-1}=
\limsup_{k\to\infty}\frac{\log\|\cF^{\circ k}\|_{HS}}{p^k}=\\
p^{-m} \limsup_{k\to\infty}\frac{\log\|\cF^{\circ {k+m}}\|_{HS}}{p^k} \textrm{ for a fixed } m\in\N.
\end{eqnarray*}
Fix $\varepsilon >0$ and assume that $l\gg 1$ such that $p^{-l}\log\|\cF^{\circ l}\|_{HS}\le \alpha' +\varepsilon$, and $p^{-l}\beta\le \varepsilon$.  Use the second inequality in \eqref{Fiter1}  to deduce
\begin{eqnarray*}
p^{-(k+l)}\log\|\cF^{\circ (k+l)}\|_{HS}\le p^{-(k+l)}\log\|\cF^{\circ k}\|_{HS} +p^{-l}\log\|\cF^{\circ l}\|_{HS}.
\end{eqnarray*}
Take $\limsup$ on $k$ to deduce
\begin{eqnarray*}
\beta'\le p^{-l}\beta' +\alpha'+\varepsilon\le \alpha'+2\varepsilon.
\end{eqnarray*}
As $\varepsilon>0$ was arbitrary we deduce that $\alpha'\ge \beta'$.  Hence $\alpha'=\beta'$ and $\alpha=\beta$.  Similar arguments show the equality if $ \alpha=\beta$ if $\alpha=0$.  This proves the last equality of \eqref{Fiter1}.

We now discuss the equality in the second inequality of \eqref{Fiter1}.
Using the equality case in part (b) of Lemma \ref{psymmaj} we obtain
\begin{eqnarray*}
\cF^{\circ k}=\bc\otimes^{p^k} \bb, \quad \cF^{\circ l}=\bb\otimes^{p^l}\ba.   
\end{eqnarray*}
Observe that each coordinate of $\bF^{\circ k}(\bF^{\circ l}(\x))$ and $\bF^{\circ l}(\bF^{\circ k}(\x))$ are proportional to $(\ba^\top \x)^{p^{k+l}}$ and $(\bb^\top \x)^{p^{k+l}}$ respectively.  As $\bF^{\circ k}(\bF^{\circ l}(\x))=\bF^{\circ l}(\bF^{\circ k}(\x))$ it follows that  $\bb$ is proportional to $\ba$, and $\bc$ is proportional to $\bb$.  Hence \eqref{Fcircklform} holds.
Vice versa if \eqref{Fcircklform} holds then equality in the second inequality of \eqref{Fiter1}.

It is left to show that $\min(k,l)\ge 2$.  
Suppose to the contrary that $\min(k,l)=1$.  So $\cF=a\otimes^{p+1}\ba$ contrary to our assumptions. 

\noindent
(b)  Use \eqref{Fnormform} and the proof of the first three inequalities in \eqref{Fiter1}  to deduce the first three inequalities in \eqref{Fiter2}.  The proof of the existence of the limit $\rho_2(\cF)$
in \eqref{Fiter1} yields the existence  of the limit  $\rho_3(\cF)$ in \eqref{Fiter2}.  The inequality
$\rho_3(\cF)\le \rho_2(\cF)$ follows from the inequality \eqref{basnrmineq}.

The equality case in in the second inequality of \eqref{Fiter2} is straightforward.

\noindent
(c)  Assume that $\cF\in\rS^{p+1}\R^n$ and let $f(\x)=\langle \cF,\otimes^{p+1}\x\rangle$.  Let $\bF(\x)=\cF\times \otimes^p\x=\frac{1}{p+1}\nabla f(\x)$.  Thus $f(\x)=\sum_{i=1}^n x_i F_i(\x)$, which is Euler's identity.  The Cauchy-Schwarz inequality yields that $|f(\x)|\le \|\x\|_2\|\|\bF(\x)\|_2$.  Hence $\|f\|_\sigma\le \|\cF\|_{\sigma}$, see \eqref{Fnormform}.
The second equality of \eqref{defnormf(x)} yields there exists $\x^\star$ such that \cite{FW20}
\begin{equation*}
\bF(\x^\star)=\pm \|\cF\|_\sigma \x^\star, \quad \|\x^\star\|_2=1.
\end{equation*}
The equality case in the second inequality of \eqref{Fiter2} yields that $\|\cF^k\|_{\sigma}=\|\cF\|_{\sigma}^{(p^k-1)/(p-1)}$.   Thus $\rho_3(\cF)=\|\cF\|_{\sigma}$.  Then inequality $\rho_3(\cF)\le \rho_2(\cF)$ in \eqref{Fiter2} yields the inequaity $\|f\|_\sigma=\|\cF\|_{\sigma}\le \rho_2(\cF)$.

We now discuss the case where $f$ is orthogonally diagonalizable.
Assume first that $f(\x)=\sum_{i=1}\lambda_i x_i^{p+1}$, where $p\ge 2$.  We can assume without loss of generality that $\lambda_1=1$ and $|\lambda_i|\le 1$ for $i\ge 2$.  As in the proof of Theorem \ref{diagthm} we have that $\|f\|_{\sigma}=1$.  Assume that $\cF\in\rS^{p+1}\R^n$ satisfies $f(\x)=\langle\cF, \otimes^{p+1}\x\rangle$. As in the proof of Theorem \ref{diagthm} to show that $1=\rho_2(\cF)$ it is enough to consider the case $f(\x)=\sum_{i=1}^n x_i^{p+1}$.  Then $\bF(\x)=(x_1^p,\ldots,x_n^p)^\top$.  Thus
$\bF^{\circ k}(\bx)=\sum_{i=1}^n x_i^{p^k}$.  Hence $\|\cF^{\circ k}\|_{HS}=\sqrt{n}$ and $\rho_2(\cF)=1$. 

Assume now that $g(\x)=f(Q\x)$, where $f(\x)=\sum_{i=1}\lambda_i x_i^{p+1}$.
Then $\bG(\x)=Q^\top \bF(Q\x)=Q^{-1}\bF(Q\x)$.  Hence 
$$\bG^{\circ k}(\x)=Q^{-1}\bF^{\circ k}(Q\x)=Q^\top \bF^{\circ k}(Q\x).$$
Then $\|\cG^{\circ k}\|_{HS}=\|\cF^{\circ k}\|_{HS}=\sqrt{n}$ and $\|\cG\|_{\sigma}=\rho_2(\cG)$.
\end{proof}

It is very plausible that $\|f\|_{\sigma}=\rho_2(\cF)$ if $f$ is proportional to a power of an orthogonally diagonalizable $g$.  However the computations are much more involved, so we left out  the treatment of this case.

 \section{A simple upper bound for the spectral norm}\label{sec:simpubsn}
 For $\cT\in \otimes_{j=1}^d \R^{n_j}$ let 
\begin{equation*}
\cT(\x_3,\ldots,\x_d):=\cT\times\otimes_{j=3}^d \x_j\in \R^{n_1\times n_2}, \quad \x_j\in\R^{n_j}, j=3,\ldots,d,
\end{equation*}
be an $n_1\times n_2$ matrix, whose entries are multilinear function on $\R^{n_3},\ldots,\R^{n_d}$.  Note that 
\begin{equation}\label{deftau}
\tau_{\cT}(\x_3,\ldots,\x_d):=\tr \cT(\x_3,\ldots,\x_d)^\top \cT(\x_3,\ldots,\x_d), \,\x_j\in\R^{n_j}, j\ge 3,
\end{equation}
is  sum-of-squares polynomial of degree $2(d-2)$, which is quadratic in each variable $\x_3,\ldots,\x_d$.

Let $L=\{ l_1,\ldots,l_r\}\subset [d]$, whose cardinality $r$ is at least two. 
We say that $\cT$ is $L$-symmetric  if: $n_{l_1}=\cdots=n_{l_r}$, and the values of $t_{i_1,\ldots,i_d}$ do not change if we permute the inidices $i_{l_1},\ldots,i_{l_r}$.  
Note that if $\cT$ is $\{1,2\}$ symmetric then  $\cT(\x_3,\ldots,\x_d)$ is a symmetric matrix of order $n_1$.  

\begin{lemma}\label{simpub}  Let $d\ge 3$ and $\cT\in\otimes^d_{j=1} \R^{n_j}$.
Then 
\begin{enumerate}[(a)]
\item
\begin{eqnarray*}
\|\cT\|_\sigma=\max\{\|\cT(\x_3,\ldots,\x_d)\|_\sigma,  \|\x_3\|_2=\cdots=\|\x_d\|=1\}\le\\
\max\{\sqrt{\tau_\cT(\x_3,\ldots,\x_d)}, \|\x_3\|_2=\cdots=\|\x_d\|=1\}.
\end{eqnarray*}
\item Assume that $d=3$.  Then $\gamma_{\cT}(\x_3)$ is a positive semidefinite quadratic form induced by $T(\cT)\in\rS^2\R^{n_3}$.  Let $\lambda(\cT)$ be the maximum eigenvalue of $T(\cT)$.  Then 
\begin{equation*}
\|\cT\|_\sigma\le \sqrt{\lambda(\cT)}\le \|\cT\|_{HS}.
\end{equation*}
\item Assume that $\cT\in\rS^d\R^n$.  Let $\tilde\tau_\cT(\x)=\tau_{\cT}(\underbrace{\x,\ldots,\x}_{d-2})$.
Then
\begin{equation*}
\|\cT\|_\sigma^2\le \max\{\tilde \tau_{\cT}(\x), \x\in\R^n, \|\x\|_2=1\}.
\end{equation*}
\end{enumerate}
\end{lemma}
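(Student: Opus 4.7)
Part (a) reduces to two standard facts. First, from the definition \eqref{specnormdef} one has
\begin{equation*}
\|\cT\|_\sigma \;=\; \max_{\|\x_j\|_2=1,\,j\in[d]} |\langle \cT,\otimes_{j=1}^d \x_j\rangle|,
\end{equation*}
and I would fix the last $d-2$ unit vectors $\x_3,\ldots,\x_d$ so that the inner maximum over unit $\x_1,\x_2$ is precisely $\|\cT(\x_3,\ldots,\x_d)\|_\sigma$, the operator norm of the $n_1\times n_2$ matrix $\cT(\x_3,\ldots,\x_d)$. Taking the maximum over $(\x_3,\ldots,\x_d)$ on the outside gives the first equality. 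For the claimed inequality, I would apply the standard matrix bound $\|M\|_\sigma \le \|M\|_{HS} = \sqrt{\tr(M^\top M)}$ for a real matrix $M$ to $M=\cT(\x_3,\ldots,\x_d)$, which by \eqref{deftau} equals $\sqrt{\tau_\cT(\x_3,\ldots,\x_d)}$.

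For part (b), note that when $d=3$ the quantity $\tau_\cT(\x_3)$ is a homogeneous quadratic form in a single variable, and since it is a sum of squares of linear forms, it is nonnegative. Hence $\tau_\cT(\x_3)=\x_3^\top T(\cT)\x_3$ for a uniquely determined symmetric positive semidefinite matrix $T(\cT)\in\rS^2\R^{n_3}$. By Rayleigh's principle, the maximum of $\tau_\cT(\x_3)$ over unit $\x_3$ is $\lambda(\cT)$, so part (a) yields $\|\cT\|_\sigma\le \sqrt{\lambda(\cT)}$. For the second inequality, I would use $\lambda(\cT)\le \tr T(\cT)$ (a PSD fact) together with the identity
\begin{equation*}
\tr T(\cT) \;=\; \sum_{i_3\in[n_3]} \tau_\cT(\be_{i_3}) \;=\; \sum_{i_1,i_2,i_3} t_{i_1,i_2,i_3}^2 \;=\; \|\cT\|_{HS}^2,
\end{equation*}
taking square roots to conclude $\sqrt{\lambda(\cT)}\le \|\cT\|_{HS}$.

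For part (c), I would invoke Banach's theorem \eqref{Banthm} to write $\|\cT\|_\sigma = \max_{\|\x\|_2=1} |\langle \cT,\otimes^d \x\rangle|$. Unraveling the contraction, $\langle \cT,\otimes^d \x\rangle = \x^\top \cT(\x,\ldots,\x)\,\x$, where the matrix $\cT(\x,\ldots,\x)$ is symmetric because $\cT$ is $d$-symmetric (so in particular $\{1,2\}$-symmetric). By Cauchy--Schwarz or by the Rayleigh estimate $|\x^\top M \x|\le \|M\|_\sigma\|\x\|_2^2$, this is bounded by $\|\cT(\x,\ldots,\x)\|_\sigma$. Applying part (a) with $\x_3=\cdots=\x_d=\x$ gives $\|\cT(\x,\ldots,\x)\|_\sigma\le\sqrt{\tilde\tau_\cT(\x)}$. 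Squaring and maximizing over unit $\x$ delivers $\|\cT\|_\sigma^2\le \max_{\|\x\|_2=1}\tilde\tau_\cT(\x)$.

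There is no real obstacle here; all three parts are unpackings of definitions, the matrix inequality $\|\cdot\|_\sigma\le\|\cdot\|_{HS}$, Rayleigh's principle, and Banach's theorem. The one small book-keeping issue I would watch is confirming the PSD structure of $T(\cT)$ and the computation of its trace in part (b), since the numerical conclusion $\sqrt{\lambda(\cT)}\le\|\cT\|_{HS}$ hinges on that identification.
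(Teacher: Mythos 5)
Your proposal is correct and follows essentially the same route as the paper: in (a) you fix $\x_3,\ldots,\x_d$ and reduce to the matrix bound $\|M\|_\sigma\le\|M\|_{HS}=\sqrt{\tr(M^\top M)}$, in (b) you identify $\tau_\cT(\x_3)=\x_3^\top T(\cT)\x_3$ with $T(\cT)$ positive semidefinite and apply Rayleigh's principle, and in (c) you invoke Banach's theorem and then apply (a) with $\x_3=\cdots=\x_d=\x$. The only (immaterial) difference is the second inequality of (b): you use $\lambda(\cT)\le\tr T(\cT)=\sum_{i_3}\tau_\cT(\be_{i_3})=\|\cT\|_{HS}^2$, whereas the paper bounds $\tau_\cT(\x_3)\le\|\cT\|_{HS}^2$ for each unit $\x_3$ directly by Cauchy--Schwarz; both yield $\sqrt{\lambda(\cT)}\le\|\cT\|_{HS}$.
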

\begin{proof}(a)
Clearly 
\begin{eqnarray*}
\|\cT\|_\sigma=\max\{\max\{|\langle\cT,\otimes_{j=1}^d \x_j\rangle|, \|\x_1\|_2=\|\x_2\|_2=1\},\|\x_3\|=\cdots=\x_d\|_2=1\}\\
=\max\{\|\cT(\x_3,\ldots,\x_d)\|_\sigma, \|\x_3\|=\cdots=\x_d\|_2=1\}.
\end{eqnarray*}
This shows the equality case in (a).  Recall that 
\begin{eqnarray*}
\|\cT(\x_3,\ldots,\x_d)\|_\sigma^2\le \tr \cT(\x_3,\ldots,\x_d)^\top \cT(\x_3,\ldots,\x_d)= 
\tau_{\cT}(\x_3,\ldots,\x_d).
\end{eqnarray*}
This proves the inequality part of (a).

\noindent
(b)  Recall that $\|\cT(\x_3)\|_\sigma^2\le \tau_{\cT}(\x_3)$. By definition $\tau_{\cT}(\x_3)$ is a sum-of-squares of linear forms.  Hence $\tau_{\cT}(\x_3)=\x^\top_3 T(\cT)\x_3$.
The maximum of this quadratic form on the sphere $\|\x_3\|_2=1$ is the maximum eigenvalue  $\lambda(T(\cT))$.  Hence (a) yields that $\|\cT\|_\sigma\le \sqrt{\lambda(T(\cT))}$.  

Assume that $\x=(x_1,\ldots,x_{n_3})^\top$ and $\|\x_3\|_2=1$.  Then
\begin{eqnarray*}
\tr \cT^\top(\x_3) \cT(\x_3)=\sum_{i=1,j=1}^{n_1,n_2}|\sum_{k=1}^{n_3} t_{i,j,k}x_k|^2\le\\
\sum_{i=1,j=1}^{n_1,n_2}\big(\sum_{k=1}^{n_3} |t_{i,j,k}|^2\big)\big(\sum_{k=1}^{n_3}|x_k|^2\big)=\sum_{i=1,j=1}^{n_1,n_2}\sum_{k=1}^{n_3} |t_{i,j,k}|^2=\|\cT\|_{HS}^2.
\end{eqnarray*}
This proves the last inequality in (b).

\noindent
(c) Recall Banach's theorem.  Observe that 
\begin{eqnarray*}
|\langle \cT,\otimes^d\x\rangle\le \|\cT(\underbrace{\x,\ldots,\x}_{d-2})\le \sqrt{\hat \tau(\x)}.
\end{eqnarray*}
The last inequality of (a) yields the inequality (c).
\end{proof}

Observe that for $d\ge 4$ an upper bound on a symmetric $\|\cT\|_\sigma^2$, which can be characterized by \cite{Ban38}, is $\|\cS\|_\sigma$ for some symmetric $\cS\in\rS^{2(d-2)}$.

We remark that Will Sawin outlines an  approach to estimate from above $\|f\|_\sigma$ for a real cubic form in \cite{Hel21}, which seems to be different from the upper bound (b) of Lemma \ref{simpub}.
\section{Additional remarks and open problems}\label{sec:remcom}
Denote by $\rP(d,n,\C)$ the vector space of all homogeneous polynomials of degree $d$ in $n$ variables over the complex numbers $\C$.  Let 
\begin{equation*}
\langle \cS,\cT\rangle:=\sum_{i_j\in[n_j], j\in[d]} s_{i_1,\ldots,i_d}\overline{t_{i_1,\ldots,i_d}}
\end{equation*}
be the standard inner product on $\otimes_{j=1}^d \C^{n_j}$.
Then
\begin{eqnarray*}
\|f\|_{\sigma,\C}=\max\{|f(\x)|,\, \|\x\|_2=1, \, \x\in\C^n\}, \quad f\in\rP(n,d,\C),\\
\|\cT\|_{\sigma,\C}=\max\{|\langle \cT,\otimes_{j=1}^d \x_j\rangle|,\, \x_j\in\C^{n_j},\|\x_j\|_2=1, j\in[d]\},\quad \cT\in\otimes_{j=1}^d \C^{n_j}.
\end{eqnarray*}
Recall that Banach's theorem \cite{Ban38} is valid over the complex numbers:
\begin{equation*}
\|\cS\|_{\sigma,\C}=\max\{|\langle \cS,\otimes^d\x\rangle|,\, \x\in\C^n,\, \|\x\|_2=1\}, \quad \cS\in\rS^d\C^n.
\end{equation*}

We claim the results of Sections \ref{sec:prodin} and \ref{sec:polmap} extends to the complex case, provided we do the following modifications:
\begin{itemize}
\item
Replace $\langle \cF,\otimes_{j=1}^d \x_j\rangle$ with $\langle \cF,\otimes_{j=1}^d \overline{\x_j}\rangle$.
\item
In Theorems \ref{diagthm} and \ref{mtheoremb} replace the notion of orthogonally diagonalizable with unitary diagonalizable.  That is, we assume that $Q$ is a unitary matrix in \eqref{diagg} and $\lambda_i$ are complex numbers.
\item
In Lemma \ref{HSocv} one assumes that $Q$ is unitary.
\end{itemize}
(Recall the  Autonne-Takagi  factorization theorem \cite[Corollary 4.4.4, part (c)]{HJ},
which claims that a complex quadratic form is unitary diagonalizable.)

Assume that $d\ge 3$ and $\cS\in\cS^d\R^n$.  Clearly $\|\cS\|_\sigma \le \|\cS\|_{\sigma,\C}$.   It is well known that a strict inequality can hold
 \cite{FL18}.  Let $f$ be the homogeneous polynomial of degree $d$ induced by $\cS$.  Then$\|f\|_{\sigma}\le \|f\|_{\sigma,\C}$ and strict inequality may hold.  Consider the quantities $\rho_1(f)$ and $\rho_2(\cS)$ as defined in \eqref{limfkHSnorm} and \eqref{Fiter1}.   Then the complex versions of Theorems  \ref{mtheorema} and  \ref{mtheoremb} yields the inequalities
\begin{equation}\label{comlub}
\|f\|_{\sigma,\C}\le \min(\rho_1(f),\rho_2(\cS)).
\end{equation}

Let $\|\x\|_r=\big(\sum_{i=1}^n |x_i|^r\big)^{1/r},r\in[1,\infty]$ be the $r$-H\"older norm.
Assume that $d\in\N$ and $f\in\rP(n,d,\R^n)$.  One can define
\begin{equation}\label{defsigrnrm}
\|f\|_{\sigma,r}=\max\{|f(\x)|, \, \|\x\|_r\le 1,\x\in\R^n\}, \quad r\in[1,\infty]
\end{equation}
As $\|\x\|_r$ is a decreasing function in $r\ge 1$ for a fixed $\x$ we deduce
\begin{equation}\label{fnrminc}
\|f\|_{\sigma,r_1}\le \|f\|_{\sigma,r_2} \textrm{ if } 1\le r_1 \le r_2\le \infty.
\end{equation}

As in \cite{LS47,Qi05} it is natural to consider the $d$-spectral norm of 
$f\in\rP(n,d,\R)$.  If $d$ is even,
it is straightforward to show that the maximum $\x$ satisfies the nonlinear eigenvalue equation
\begin{equation}\label{eigeq}
\cT\times \otimes^{d-1}\x=\lambda \x^{\circ (d-1)}, \quad \x^{\circ (d-1)}=(x_1^{d-1},\ldots,x_n^{d-1})^\top, \, \|\x\|_d=1.
\end{equation}
Here $\cT\in \rS^{d}\R^n$ and $f(\x)=\langle \cT,\otimes^d\x\rangle$.  
The above equation makes sense for $\cT\in \R^n\times \rS^{d-1}\R^n$,  $\lambda\in\C$ and $\x\in\C^n$.  The scalar $\lambda$ and the vector $\x$  are called the eigenvalue and the eigenvector of $\cT$.
The maximum absolute value of the real and complex eigenvalues is called the spectral radius of $\cT$, and
denoted by $\rho(\cT)$.   Thus for $d$ even we have the inequality $\|f\|_{\sigma,d}\le \rho(\cT)$.

 Assume that  $\cT=[t_{i_1,\ldots,i_d}]\in \R^n\times \rS^{d-1}\R^n$ and let $|\cT|=[|t_{i_1,\ldots,i_d}|]\in \R^n\times \rS^{d-1}\R^n$.  Then $\rho(|\cT|)$ has the Collatz-Wielandt characterization \cite[(3.10)]{FG20}
 \begin{equation}\label{infmaxcharrho}
 \rho(|\cT|)=\inf_{\x=(x_1,\ldots,x_n)\trans>\0} \max_{i\in [n]}\frac{\big(|\cT|\times \otimes^{d-1}\x\big)_i}{x_i^{d-1}}.
 \end{equation}
 Hence $\rho(\cT)\le \rho(|\cT|)$.  In particular $\||f|\|_{\sigma,d}=\rho(|\cT|)$, where $\cT$ is the symmetric tensor induced by $f$.  Note that characterization \eqref{infmaxcharrho} yields easy upper bounds for $\rho(|\cT|)$.  

Assume in addition that $\cT=|\cT|$.  Suppose furthermore that $\cT$ is weakly irreducible \cite{FGH13}.  Then there is a unique positive eigenvector $\bu$ that satisfies \eqref{eigeq}, and the corresponding eigenvalue is $\rho(\cT)$.  Corollary 5.1 in \cite{FGH13} claims that for any $\y>\0$ the iterations of the map 
\begin{equation*}
\widetilde{\bF}(\y)=\big(\sum_{i=1}^n (F_i(\y))^{1/(d-1)}\big)^{-1}\big((F_1(\y))^{1/(d-1)},\ldots, (F_n(\y))^{1/(d-1)}\big)^\top>\0
\end{equation*}
converge to a positive mulitple of $\bu$.

 \bigskip
We now state the following open problems:
\begin{enumerate}[A.]
\item  Assume that $\cS\in \rS^d\C^n$ and $f(\x)=\langle \cS,\otimes^d\bar\x\rangle$.  Is there a relation between between $\rho_1(f)$ and $\rho_2(\cS)$?
\item Is there a dynamics meaning of $\rho_2(\cF)$ when studying the iterations of $\bF$ in the complex projective space $\mathbb{P}^{n-1}$?
\end{enumerate}
\bigskip
{\em Acknowledgement}: I thank Lek-Heng Lim for pointing out the mathoverflow discussions \cite{Hel21}, and Harald Andres Helfgott for very useful remarks.  The author was partially supported by Simons collaboration grant for mathematicians.

\bibliographystyle{plain}

\end{document}